\renewcommand{\ldots}{\ensuremath{\dotsc}}
\newcommand{\comment}[1]{}
\def\H{{\mathcal H}}
\newcommand{\X}{\mathcal{X}}
\def\span{{\rm span}}
\def\dim{{{\rm dim}}}
\def\max{{{\rm max}}}
\def\min{{{\rm min}}}
\newtheorem{remark}[theorem]{Remark}
\title{Bounds for the Rayleigh quotient and the spectrum of self-adjoint operators
\thanks{%
\today .
Preliminary posted at \url{http://arxiv.org/abs/math/0610498}. 
This material is based upon work partially supported by the National Science Foundation under Grant No. 1115734. 
}}
\author{%
Peizhen Zhu\footnotemark[2]\ \footnotemark[3],
Merico E. Argentati\footnotemark[2]\ \footnotemark[3]
\and
Andrew V. Knyazev\footnotemark[2]\ \footnotemark[3]\ \footnotemark[4]\ \footnotemark[5]
}
\begin{document}
\vspace{-1.2in}
\vspace{.9in}

\setcounter{page}{1}
\maketitle

\renewcommand{\thefootnote}{\fnsymbol{footnote}}
\footnotetext[2]{Department of Mathematical and Statistical Sciences; 
University of Colorado Denver,
P.O. Box 173364, Campus Box 170, Denver, CO 80217-3364, USA.}
\footnotetext[3]{(peizhen.zhu,merico.argentati,andrew.knyazev)[at]ucdenver.edu}
\footnotetext[4]{Mitsubishi Electric Research Laboratories; 201 Broadway
Cambridge, MA 02139}
\footnotetext[5]{
 \url{http://www.merl.com/people/?user=knyazev} and \url{http://math.ucdenver.edu/~aknyazev/}
}
\renewcommand{\thefootnote}{\arabic{footnote}}

\begin{abstract}
The absolute change in the Rayleigh quotient (RQ) is bounded in this paper 
in terms of the norm of the residual and the change in the vector. 
If $x$ is an eigenvector of a self-adjoint bounded 
operator $A$ in a Hilbert space, then the RQ of the vector $x$, denoted by  $\rho(x)$, is an exact 
eigenvalue of $A$. In this case, the absolute change of the RQ 
$|\rho(x)-\rho(y)|$ becomes the absolute error for an eigenvalue  $\rho(x)$ of $A$ approximated 
by the RQ $\rho(y)$ on a given vector $y.$ There are
three traditional kinds of bounds for eigenvalue errors: 
a priori bounds via the angle between vectors $x$ and $y$; 
a posteriori bounds via the norm of the residual $Ay-\rho(y)y$ of vector $y$;
mixed type bounds using both the angle and the norm of the residual. 
We propose a unifying approach to prove known bounds of the spectrum,  
analyze their sharpness, and derive new sharper bounds. 
The proof approach is based on novel RQ vector perturbation identities.
\end{abstract}
\begin{keywords}
angles, perturbation, error analysis, Rayleigh quotient, eigenvalue.
\end{keywords}
\begin{AM}
15A42, 
15A60, 
65F35. 
\end{AM}


\pagestyle{myheadings}
\thispagestyle{plain}
\markboth{PEIZHEN ZHU, MERICO E. ARGENTATI, and ANDREW V. KNYAZEV}
{RQ perturbation bounds} 

\section{Introduction}\label{intro}

Let $A$ be a bounded self-adjoint operator in a real 
or complex Hilbert space $\H$.
For a nonzero vector $x$, the Rayleigh quotient (RQ) is defined by 
\[
\rho(x)=\rho(x,A)=
\frac{\left\langle x,Ax\right\rangle}{\left\langle x,x\right\rangle}
\]
and the corresponding residual vector is denoted by
$r\left(x\right)=r(x,A)=Ax-\rho(x)x,$
where $\left\langle \cdot,\cdot\right\rangle$ is an inner product, associated with a norm 
$\|\cdot\|^2=\left\langle \cdot, \cdot\right\rangle.$
The acute angle between two nonzero vectors $x$ and $y$ is denoted by 
\[
\angle\left\{x,y\right\}=\arccos\frac{|\left\langle x, y\right\rangle|}{\|x\|\|y\|}.
\]

We are interested in RQ vector perturbation bounds. 
Specifically, for nonzero vectors $x$ and $y$, 
we want to bound the following quantity, $|\rho(x)-\rho(y)|$,  
in terms of $\angle\left\{x,y\right\}$,  
$\|r(x)\|$, and $\|r(y)\|.$ If $x$ is an eigenvector of $A$, then 
the RQ of $x$ is an exact eigenvalue of $A$ and $r(x)=0.$ 
In this case, the absolute change of the RQ $|\rho(x)-\rho(y)|$ becomes 
the absolute error in the eigenvalue $\rho(x)$ of $A$. The RQ is 
often used to approximate points of the spectrum $\Sigma(A)$ of $A$.

Known bounds of the spectrum are traditionally classified depending on 
the terms that appear in the bounds. Let $\lambda\in\Sigma(A)$ be approximated by 
$\rho(y)$, i.e., the absolute approximation error is $|\lambda-\rho(y)|.$ 
Bounds of $|\lambda-\rho(y)|$ that are based on the norm of the residual 
$\|r(y)\|$ are called ``a posteriori'' bounds, since the residual 
$r(y)$ and its norm can typically be computed for the given vector $y$.

If $\lambda$ is an eigenvalue with the corresponding eigenvector $x$, 
bounds for $|\lambda-\rho(y)|$ that rely on the angle $\angle\left\{x,y\right\}$ are called 
``a priori,'' since the eigenvector $x$ is usually not explicitly known, and 
some a priori information about $x$ needs to be used to bound 
the angle $\angle\left\{x,y\right\}$. In the context of Finite Element Method 
error bounds, where $x$ and $y$ are functions, such information is usually 
associated with the smoothness of the function $x$; see, e.g.,\ \cite{ka09} and references there.
Finally, we call a bound  
``a mixed type'' if it involves both terms, the residual norm $\|r(y)\|$  and
the angle $\angle\left\{x,y\right\}$.

Sharp bounds of the spectrum approximation error are very important in the theory
of numerical solution of self-adjoint eigenvalue problems.
Computable a posteriori bounds allow one to obtain approximations from below and above to 
points of the spectrum. A~priori and mixed type bounds give one 
an opportunity to determine the quality of approximation of a point of the spectrum by using the RQ. 

We revisit this classical topic of research for three reasons. 
Our first goal is to carefully examine a posteriori and a priori bounds together, 
to understand what they have in common and why they are so different. 
Second, we are interested in discovering a general framework 
for deriving bounds of $|\rho(x)-\rho(y)|$ for arbitrary vectors 
$x$ and $y$ where known bounds of the spectrum approximation error become corollaries. 
Last, but not least, bounds of $|\rho(x)-\rho(y)|$ for arbitrary vectors 
$x$ and $y$  are practically important on their own, e.g.,\ 
as a tool for deriving sharp convergence rate bounds of eigenvalue solvers and 
analyzing the effects of inexact computations. 
We anticipate that our results will lead to new proof techniques and sharp bounds  
of accuracy of Ritz and harmonic (e.g.,\ \cite{se2003}) Ritz pairs, 
which are important in applications.

We start, in section \ref{review}, by reviewing three known bounds of 
the spectrum approximation error, each representing 
a priori, a posteriori, and mixed types, correspondingly. 
In section~\ref{section2}, we derive new identities and bounds for 
the change in the RQ with respect to the change of the vectors, using an orthogonal
projector on the subspace $\span\{x,y\}.$ The idea of such a ``low-dimensional'' analysis 
has been found fruitful before, e.g.,\ in \cite{k86}. We observe that in the case 
$\dim\H=2$ all three bounds, which we want to reinvent, turn 
into the same identity. 
In section \ref{section3}, we derive the a priori and mixed type bounds using this identity, 
using a single inequality, different for each bound, only at the last step. 
This unifying proof technique allows us to easily specify the circumstances where the bounds are sharp. 
We dedicate section~\ref{section4} to a posteriori bounds. Our ``low-dimensional'' 
analysis not only allows us to find a novel proof of a well known a posteriori bound, but also 
gives us an opportunity to improve it, and obtain several new sharper results. 
Our bonus section \ref{sec:vectors} touches the topic of improving known  $\sin(2\theta)$ and $\tan(\theta)$
error bounds for eigenvectors.

\section{Short review of some known error bounds for eigenvalues}\label{review}
Let $y$ be an approximation to an eigenvector $x$ of the operator $A$, corresponding to 
the eigenvalue $\rho(x)$ of $A.$ An a priori bound involves a constant 
and the square of the sine of the angle between the eigenvector $x$ and the vector $y$, 
see, e.g.,\ \cite{{kja10},{ka03},{ruhe76}},
\begin{equation}
\label{eq:sinsquare1} 
|\lambda-\rho(y)|\leq \left(\Sigma_{\max}(A)-\Sigma_{\min}(A)\right)\sin^2\left( \angle\left\{x,y\right\} \right),
\end{equation}
where we denote $\lambda=\rho(x)$, and 
$\Sigma_{\max}(A)$ and $\Sigma_{\min}(A)$
denote the largest and the smallest points of the spectrum $\Sigma(A)$ of $A$, correspondingly. 

The mixed type bound shown in 
\cite{{knyazev97},sun91} is in terms of the
 norm of the residual vector and the tangent of the angle between 
 vectors $x$ and $y$, i.e.,\
\begin{equation}
\label{eq:tanresidual1}
|\lambda-\rho(y)|\leq \frac{\|r(y)\|}{\|y\|}\tan\left(\angle\left\{x,y\right\}\right).
\end{equation}

There is no constraint on the location of the eigenvalue $\rho(x)=\lambda$ relative to $\rho(y)$
in \eqref{eq:sinsquare1} and \eqref{eq:tanresidual1}. 
Bounds \eqref{eq:sinsquare1} and \eqref{eq:tanresidual1} hold
for any nonzero vector $y$ and any eigenvector~$x.$ 
The next bound we review removes the eigenvector~$x$ from the picture, but instead 
requires having some information about the 
spectrum of $A$ in the neighborhood of $\rho(y)$. 
This a posteriori bound involves the square of the norm of the
residual vector and the gap between $\rho(y)$ and the spectrum  $\Sigma\left(A\right)$ of the operator $A$.  

Let real numbers $\alpha<\beta$ be such that $\Sigma\left(A\right)\cap\left(\alpha, \beta\right)=\emptyset$. 
Using the spectral decomposition of the self-adjoint bounded operator $A$, 
and the fact that the quadratic polynomial $(t-\alpha)(t-\beta)\geq0$ for any real $t$ outside of 
the interval $(\alpha,\beta)$, it is easy to see that
$
\left(A-\alpha I\right)(A-\beta I)\geq0,
$
where $I$ is the identity, i.e.,\ that 
$\left\langle Ay-\alpha y,Ay - \beta y\right\rangle\geq0$
for any vector $y$. Elementary algebraic manipulations for nonzero $y$ show that 
this inequality is equivalent to the famous bound attributed to Temple, see \cite{kato76,temple28}, 
\begin{equation}
\label{eqn:betaalphabound}
\left(\beta-\rho(y)\right)(\rho(y)-\alpha)\leq\frac{\|r(y)\|^2}{\|y\|^2}.
\end{equation}
In the nontrivial case $\alpha<\rho\left(y\right)<\beta$, we choose 
the scalars $\alpha\in\Sigma\left(A\right)$ and $\beta\in\Sigma\left(A\right)$ 
as the nearest points of the spectrum below and above $\rho\left(y\right).$
This results in the tightest bound \eqref{eqn:betaalphabound}, since
its left-hand side is monotonic in $\alpha$ and $\beta$.
In other words, for given $A$ and $y$ with $\rho(y)\notin\Sigma\left(A\right)$, 
there exist $\alpha\in\Sigma\left(A\right)$ and $\beta\in\Sigma\left(A\right)$ 
such that  $\alpha<\rho\left(y\right)<\beta$ and  
\eqref{eqn:betaalphabound} holds. 
Bound \eqref{eqn:betaalphabound} is sharp if 
$\alpha$ and $\beta$ are eigenvalues of $A$ and the vector $y$ is a linear combination of 
the corresponding eigenvectors. 

Next, for  $A$ and $y$ with $\rho(y)\notin\Sigma\left(A\right)$, 
there exist  $\lambda\in\Sigma\left(A\right)$ and real scalars $a$ and $b$  
such that  $a<\rho\left(y\right)<b$ and $a\leq\lambda\leq b,$ 
while  $\Sigma\left(A\right)\cap\left(a, \lambda\right)=\emptyset$
and $\Sigma\left(A\right)\cap\left(\lambda,b\right)=\emptyset$, and 
Kato-Temple's inequality (see, e.g.,\ \cite{{kato76}} and \cite[Theorem VIII.5, p. 84]{rs}) holds,
\begin{equation}
\label{eqn:katoinequality}
-\frac{1}{\rho(y)-a}\frac{\|r(y)\|^2}{\|y\|^2}\leq
\rho(y)-\lambda\leq\frac{1}{b-\rho(y)}\frac{\|r(y)\|^2}{\|y\|^2}.
\end{equation}
Indeed, in the case where $\lambda$ is located to the right of $\rho(y)$ 
we take $\alpha=a$ and $\beta=\lambda$ in \eqref{eqn:betaalphabound} to get the lower bound
of \eqref{eqn:katoinequality}. In the opposite case, $\lambda<\rho(y)$,
we take $\alpha=\lambda$ and $\beta=b$
to obtain the upper bound of \eqref{eqn:katoinequality}. 
If $\lambda\in(a,b)$, then $\lambda=\Sigma\left(A\right)\cap(a,b)$, 
and thus $\lambda$ is unique and in fact is an isolated eigenvalue.

Let $a\in\Sigma\left(A\right)$ and $b\in\Sigma\left(A\right)$ in \eqref{eqn:katoinequality} and denote
$\delta=\min_{\eta\in\{\Sigma(A)\backslash\{\lambda\}\}}|\eta-\rho(y)|$. 
Checking different scenarios $\lambda=a$, $\lambda=b$, and $\lambda\in(a,b)$  in \eqref{eqn:katoinequality}, 
we observe that inequalities~\eqref{eqn:katoinequality} imply, see, e.g.,\ \cite{{Chatelin},{saas92},{ss90}},
the existence of  $\lambda\in\Sigma\left(A\right)$ such that 
\begin{equation}
\label{eq:gapresidual1}
|\lambda-\rho(y)|\leq 
\frac{1}{\delta}\frac{\|r(y)\|^2}{\|y\|^2}.
\end{equation}
In bound \eqref{eq:gapresidual1}, $\lambda\in\Sigma\left(A\right)$ may be not unique and 
in general does not have to be an isolated point of the spectrum, or even an eigenvalue. 
Bound \eqref{eq:gapresidual1} is sharp if at least one 
of the scalars $\rho(y)-\delta$ or $\rho(y)+\delta$
is an eigenvalue of $A$ and  the vector $y$ is a linear combination of 
the corresponding eigenvector and an eigenvector corresponding to $\lambda$.

The Krylov-Weinstein \cite[p. 321]{{yo80}} (Bauer-Fike \cite[Theorem 3.6]{{saas92}}) theorem 
states the existence (but not necessarily uniqueness) of  $\lambda\in\Sigma\left(A\right)$ such that 
\begin{equation} \label{eq:residual1}
|\lambda-\rho(y)|\leq \frac{\|r(y)\|}{\|y\|}.
\end{equation}
It follows directly from \eqref{eqn:betaalphabound}, by choosing 
$\lambda=\alpha\in\Sigma\left(A\right)$  or $\lambda=\beta\in\Sigma\left(A\right)$,
whichever is closer to $\rho(y)$.
As in \eqref{eq:gapresidual1}, $\lambda\in\Sigma\left(A\right)$ does not have to be 
an isolated point of the spectrum, or even an eigenvalue. 
Bound \eqref{eq:residual1} turns into equality if 
bound \eqref{eqn:betaalphabound} is equality and, in addition, $\rho(y)-\alpha=\beta-\rho(y)$
in  \eqref{eqn:betaalphabound} .

This derivation of \eqref{eq:residual1} from bound \eqref{eqn:betaalphabound} 
following \cite[Corollary 6.20, p. 303]{Chatelin} is not so well known.  
We conclude that bound \eqref{eqn:betaalphabound} is the most fundamental a posteriori bound,
since all other a posteriori bounds, reviewed here, can be derived from it. 

The main goal of the rest of the paper is to revisit bounds \eqref{eq:sinsquare1},\,
\eqref{eq:tanresidual1}, and \eqref{eqn:betaalphabound}.
We propose a new unifying approach to prove them,  
analyze their sharpness, and derive some new sharper bounds. 
The proof approach is based on novel RQ vector perturbation identities 
presented in the next section.

\section{Key identities for the RQ}\label{section2}
 We start with a few trivial but key properties of the RQ. Then we derive a couple of 
simple expressions for the norm of the residual. Finally, our main results, 
several identities for the absolute change in the RQ, follow. 

Let $A$ be a bounded self-adjoint operator on a real or complex Hilbert space $\H$.
Let $S$ denote a subspace of $\H$ and $P_{S}$ be an orthogonal
projector on $S$. Let $A_S=\left(P_{S}A\right)|_{S}$ denote 
the restriction of the operator $P_{S}A$ to the subspace $S$. 
For a nonzero vector $x\in S$, we denote 
$\rho(x, A_S)=\langle x,A_S x\rangle/\langle x,x\rangle$ and
$r(x, A_S)=A_S x-\rho(x, A_S)x\in S$. We start with a couple of trivial, but
extremely important, for our approach, lemmas. 
\begin{lemma}
\label{lemma:rayleighquoientequation}
 If $x\in S$ then $\rho(x, A_S)=\rho\left(x\right)$.\end{lemma}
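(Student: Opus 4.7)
The plan is to unfold the definitions and exploit the two defining properties of the orthogonal projector $P_S$: that $P_S$ is self-adjoint, and that it acts as the identity on $S$. Specifically, since $x \in S$, we have $P_S x = x$, and for any $z \in \H$, $\langle P_S z, w\rangle = \langle z, P_S w\rangle$.

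The main calculation is a single chain of equalities in the numerator. By the definition $A_S = (P_S A)|_S$, we have $A_S x = P_S A x$ for $x \in S$. Then
\[
\langle x, A_S x\rangle = \langle x, P_S A x\rangle = \langle P_S x, A x\rangle = \langle x, Ax\rangle,
\]
using self-adjointness of $P_S$ in the middle step and $P_S x = x$ at the end. Dividing both sides by $\langle x, x\rangle$ yields $\rho(x, A_S) = \rho(x)$.

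There is essentially no obstacle here; the content of the lemma is that the Rayleigh quotient of a vector in $S$ with respect to the compressed operator $A_S$ agrees with its Rayleigh quotient with respect to the full operator $A$. The only thing worth being careful about is not to confuse $A_S x$ with $A x$ in intermediate steps — these need not be equal, but their inner products with $x \in S$ coincide, which is exactly what the self-adjointness of $P_S$ supplies.
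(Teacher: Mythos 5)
Your proof is correct and follows the same route as the paper: use the self-adjointness of $P_S$ together with $P_S x = x$ to show $\langle x, P_S A x\rangle = \langle x, Ax\rangle$, then divide by $\langle x,x\rangle$. The paper's own proof is just a more compressed version of this identical calculation.
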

\begin{proof} The orthogonal projector $P_S$ is self-adjoint, so
\begin{eqnarray*} 
\rho(x, A_S)=\frac{\left\langle x,P_{S}Ax\right\rangle}{\left\langle x,x\right\rangle}=
\frac{\left\langle x,Ax\right\rangle}{\left\langle x,x\right\rangle}=\rho\left(x\right).
\end{eqnarray*} 
\end{proof}

\begin{lemma}
\label{lemma:residualequation}
 If $ x \in S$ then $r(x, A_S)=P_{S}r(x).$
\end{lemma}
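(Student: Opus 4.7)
The plan is to unwind the definitions of $r(x,A_S)$ and $P_S r(x)$ and show both reduce to the same expression $P_S Ax - \rho(x)x$. The proof should be only a few lines, built from two elementary facts: (i) $x\in S$ implies $P_S x = x$, and (ii) Lemma \ref{lemma:rayleighquoientequation} gives $\rho(x,A_S)=\rho(x)$.

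First, I would expand the left-hand side. By definition, $A_S=(P_S A)|_S$, so for $x\in S$ we have $A_S x = P_S A x$. Hence
\[
r(x,A_S) \;=\; A_S x - \rho(x,A_S)\,x \;=\; P_S A x - \rho(x)\,x,
\]
where the last equality uses Lemma \ref{lemma:rayleighquoientequation}.

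Next, I would expand the right-hand side. Using linearity of $P_S$ and the fact $P_S x = x$ for $x\in S$,
\[
P_S r(x) \;=\; P_S\!\left(Ax - \rho(x)x\right) \;=\; P_S A x - \rho(x)\,P_S x \;=\; P_S A x - \rho(x)\,x.
\]
Comparing the two displays yields the claimed identity.

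There is no real obstacle here; the only thing to be careful about is not to confuse $A_S x$ (defined only for $x\in S$ as the restriction of $P_S A$) with $P_S A x$ for arbitrary $x$, though on $S$ these coincide by definition. The lemma is essentially a bookkeeping step that will be used later to pass between the ambient residual $r(x)$ and the residual $r(x,A_S)$ of the compressed operator on $\span\{x,y\}$.
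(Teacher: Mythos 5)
Your proof is correct and follows essentially the same route as the paper: both use $A_Sx=P_SAx$ for $x\in S$, Lemma \ref{lemma:rayleighquoientequation} to replace $\rho(x,A_S)$ by $\rho(x)$, and $P_Sx=x$ to absorb the term $\rho(x)x$ into the projection. The only cosmetic difference is that you expand both sides to a common expression while the paper writes a single chain of equalities.
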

\begin{proof}Directly by the definition and Lemma \ref{lemma:rayleighquoientequation}, we obtain
\begin{eqnarray*} 
r(x, A_S)=A_Sx-\rho(x, A_S)x=P_{S}Ax-\rho(x)x
                   =P_{S}(Ax-\rho(x)x)
                   =P_{S}r(x).
\end{eqnarray*} 
\end{proof}

\begin{corollary}
\label{cor:eigenvector}
If $x\in S$ is an eigenvector of $A$, it is also an eigenvector of $A_S$, 
corresponding to the same eigenvalue  $\rho(x)=\rho(x, A_S)$.
\end{corollary}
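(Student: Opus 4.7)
The corollary is a very direct consequence of the two lemmas just established, so the proof plan is short: I would simply translate ``$x$ is an eigenvector of $A$'' into the residual language and push it through Lemma~\ref{lemma:residualequation}.

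First, I would observe that by the definition of the residual, $x$ being an eigenvector of $A$ is equivalent to $r(x)=Ax-\rho(x)x=0$, with the corresponding eigenvalue being precisely $\rho(x)$. Then Lemma~\ref{lemma:residualequation} gives $r(x,A_S)=P_S r(x)=P_S\, 0 = 0$, i.e.\ $A_S x=\rho(x,A_S)\,x$. Finally, Lemma~\ref{lemma:rayleighquoientequation} identifies $\rho(x,A_S)=\rho(x)$, so $A_S x=\rho(x)\,x$, which is exactly the statement that $x$ is an eigenvector of $A_S$ with the same eigenvalue $\rho(x)$.

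No obstacle arises: the entire argument is a one-line composition of the two lemmas, and the hypothesis $x\in S$ is exactly what is required to invoke them. The only small point worth mentioning explicitly in the write-up is that $x\ne 0$ (implicit in ``$x$ is an eigenvector'') is needed so that the Rayleigh quotients $\rho(x)$ and $\rho(x,A_S)$ are defined. Accordingly, I would present the proof as a two-sentence deduction, citing Lemma~\ref{lemma:residualequation} to kill the residual in $S$ and Lemma~\ref{lemma:rayleighquoientequation} to identify the eigenvalue.
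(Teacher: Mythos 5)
Your proof is correct and follows essentially the same route as the paper's: translate the eigenvector hypothesis into $r(x)=0$, apply Lemma~\ref{lemma:residualequation} to get $r(x,A_S)=P_S r(x)=0$, and use Lemma~\ref{lemma:rayleighquoientequation} to identify the eigenvalue. No gaps.
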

\begin{proof}
By Lemma  \ref{lemma:rayleighquoientequation}, $\rho(x)=\rho(x, A_S).$
For an eigenvector $x$ of $A$, the corresponding eigenvalue is $\rho(x)=\rho(x, A_S)$, so
$r(x)=0$, and $r(x, A_S)=P_{S}r(x)=0$, by Lemma  \ref{lemma:residualequation}.
Thus $x$ is also an eigenvector of $A_S$, with the same eigenvalue.
\end{proof}

{\bf
\emph{In the rest of the paper we always assume or prove that $\dim\,S=2.$}   
}

Now, we collect some basic identities, using the eigenvectors of $A_S$.
 Let us denote%
\footnote{We intentionally introduce new notation $\lambda_{\max}(A_S)=\Sigma_{\max}(A_S)$ and 
$\lambda_{\min}(A_S)=\Sigma_{\min}(A_S)$ to underline the fact that 
these points of the spectrum $\Sigma(A_S)$ are actually \emph{eigenvalues}.} 
$\mu=\lambda_{\max}(A_S)=\rho(u_1)$ and $\nu=\lambda_{\min}(A_S)=\rho(u_2)$, 
where $u_1$ and $u_2$ are orthogonal eigenvectors of the operator $A_S$. 
Let $P_i$ with $i=1,\,2$ denote the orthogonal projector on the subspace $\span\{u_i\}$. 
Assuming a nontrivial case $P_i x\neq 0$ for $i=1,\,2$, we evidently have  
$\angle\left\{x,u_i\right\}=\angle\left\{x,P_i x\right\}$. Now, since $x\in S$ and $\dim\,S=2,$  
\[
\rho\left(x\right)=\frac{\langle x,A_Sx\rangle}{\langle x,x\rangle}=
\frac{\langle P_1x,A_SP_1x\rangle+\langle P_2x,A_SP_2x\rangle}{\langle P_1x,P_1x\rangle+\langle P_2x,P_2x\rangle}=
\frac{\mu\langle P_1x,P_1x\rangle+\nu\langle P_2x,P_2x\rangle}{\langle P_1x,P_1x\rangle+\langle P_2x,P_2x\rangle}
\]
where $x=P_1x+P_2x$ and $P_1P_2=P_1A_SP_2=0$. Therefore, 
\begin{equation} \label{eqn:rem2D1}
\mu-\rho\left(x\right)=(\mu-\nu)\sin^2\left(\angle\left\{x,u_1\right\}\right) 
\text{ and } 
\rho\left(x\right)-\nu=(\mu-\nu)\sin^2\left(\angle\left\{x,u_2\right\}\right).
\end{equation}
Evidently, in a two-dimensional space,
\begin{eqnarray} \label{eqn:rem2D2}
 \cos\left(\angle\left\{x,u_1\right\}\right)=\sin\left(\angle\left\{x,u_2\right\}\right)  
\text{ and } 
\cos\left(\angle\left\{x,u_2\right\}\right)=\sin\left(\angle\left\{x,u_1\right\}\right).
\end{eqnarray}

\subsection{Identities for the norm of the residual $r\left(x,A_S\right)=P_{S}r\left(x\right)$} 
Our main identity is in the following lemma. 
\begin{lemma}
\label{lemma:generalresidual1}
Let $x\in S$ and $\dim\,S=2.$ Then 
\begin{eqnarray} \label{eqn:generalresidual1}
\left[\lambda_{\max}\left(A_S\right)-\rho(x)\right]
\left[\rho(x)-\lambda_{\min}\left(A_S\right)\right]
&=&\frac{\|P_{S}r\left(x\right)\|^2}{\|x\|^2}.
\end{eqnarray}
\end{lemma}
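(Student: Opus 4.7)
The plan is to exploit the two-dimensional structure of $S$ via the eigendecomposition of $A_S$, which is already set up in equations (\ref{eqn:rem2D1}) and (\ref{eqn:rem2D2}). Since $A_S$ is self-adjoint on the two-dimensional space $S$, it has an orthonormal eigenbasis $\{u_1,u_2\}$ with eigenvalues $\mu=\lambda_{\max}(A_S)$ and $\nu=\lambda_{\min}(A_S)$. I would first handle the degenerate case where $P_1 x = 0$ or $P_2 x = 0$ separately: then $x$ is an eigenvector of $A_S$, so $\rho(x)\in\{\mu,\nu\}$ and $r(x,A_S)=0$ by Corollary \ref{cor:eigenvector}, making both sides of \eqref{eqn:generalresidual1} vanish trivially.

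In the generic case $P_i x\ne 0$, first compute the residual in the eigenbasis. Writing $x = P_1 x + P_2 x$ and using $A_S u_i = \lambda_i u_i$, I get
\[
r(x,A_S) = A_S x - \rho(x) x = (\mu-\rho(x))P_1 x + (\nu-\rho(x))P_2 x.
\]
Since $P_1 x \perp P_2 x$, the Pythagorean identity yields
\[
\frac{\|r(x,A_S)\|^2}{\|x\|^2} = (\mu-\rho(x))^2\frac{\|P_1 x\|^2}{\|x\|^2} + (\nu-\rho(x))^2\frac{\|P_2 x\|^2}{\|x\|^2}.
\]
Next, substitute the ratios $\|P_i x\|^2/\|x\|^2 = \cos^2(\angle\{x,u_i\})$, and then use \eqref{eqn:rem2D2} to rewrite them as $\sin^2(\angle\{x,u_{3-i}\})$.

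The key substitution at the end is to apply \eqref{eqn:rem2D1} to replace $(\mu-\rho(x))$ and $(\rho(x)-\nu)$ by $(\mu-\nu)\sin^2(\angle\{x,u_1\})$ and $(\mu-\nu)\sin^2(\angle\{x,u_2\})$, respectively. After factoring out $(\mu-\nu)^2\sin^2(\angle\{x,u_1\})\sin^2(\angle\{x,u_2\})$, what remains in brackets is $\sin^2(\angle\{x,u_1\})+\sin^2(\angle\{x,u_2\})$, which equals $1$ by \eqref{eqn:rem2D2}. This collapses the expression to
\[
\frac{\|P_S r(x)\|^2}{\|x\|^2} = \bigl[(\mu-\nu)\sin^2(\angle\{x,u_1\})\bigr]\bigl[(\mu-\nu)\sin^2(\angle\{x,u_2\})\bigr] = (\mu-\rho(x))(\rho(x)-\nu),
\]
where on the left I used Lemma \ref{lemma:residualequation} to rewrite $\|r(x,A_S)\|=\|P_S r(x)\|$.

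I do not anticipate a real obstacle here; the only subtlety is bookkeeping of which angle corresponds to which projector, since \eqref{eqn:rem2D1} and \eqref{eqn:rem2D2} swap the roles of $u_1$ and $u_2$ between the Rayleigh quotient gap and the squared projection ratios. As long as that swap is done carefully, the identity drops out of one line of algebra. It is also worth noting that the same conclusion follows even more directly from Temple-type reasoning: the quadratic $(t-\nu)(\mu-t)$ applied to $A_S$ satisfies $(A_S-\nu I_S)(\mu I_S - A_S)\ge 0$ with equality on $S$ (since $A_S$ has exactly these two eigenvalues), and evaluating $\langle x,(A_S-\nu I_S)(\mu I_S-A_S)x\rangle/\|x\|^2$ yields both sides of \eqref{eqn:generalresidual1} at once; I would mention this as an alternative derivation but prefer the eigenbasis calculation above for its explicit use of the identities already proved.
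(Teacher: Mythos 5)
Your argument is correct, including the separate treatment of the degenerate case $P_ix=0$ and the careful swap between $\cos^2(\angle\{x,u_i\})$ and $\sin^2(\angle\{x,u_{3-i}\})$; the algebra $(\mu-\nu)^2 s_1^2 s_2+(\mu-\nu)^2 s_2^2 s_1=(\mu-\nu)^2 s_1 s_2(s_1+s_2)=(\mu-\rho(x))(\rho(x)-\nu)$ checks out. However, your \emph{primary} route is not the one the paper takes. The paper never diagonalizes: it writes $\|r(x,A_S)\|^2=\|A_Sx\|^2-\rho^2(x,A_S)\|x\|^2$ (from $\langle r(x,A_S),x\rangle=0$) and then observes that
\[
\frac{\|r(x,A_S)\|^2}{\|x\|^2}-(\mu-\rho(x))(\rho(x)-\nu)
=\frac{\left\langle A_Sx-\mu x,\,A_Sx-\nu x\right\rangle}{\|x\|^2}=0,
\]
since $(A_S-\mu)(A_S-\nu)=0$ is the minimal polynomial of $A_S$ on the two-dimensional space $S$. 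That is precisely the ``Temple-type'' alternative you sketch in your closing remark, so you have in effect found both proofs. The paper's version is shorter, needs no case split for eigenvector $x$, and works verbatim in the complex case without tracking angles; yours has the advantage of reusing the identities \eqref{eqn:rem2D1}--\eqref{eqn:rem2D2} already established and of making the geometric content (the product of the two squared sines) explicit, which is exactly the computation the paper then performs anyway in the proof of Lemma \ref{lemma:generalresidual2}. One cosmetic point: in the degenerate case you cite Corollary \ref{cor:eigenvector}, but that corollary passes from eigenvectors of $A$ to eigenvectors of $A_S$; here $x$ is directly an eigenvector of $A_S$, so $r(x,A_S)=0$ needs no citation.
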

\begin{proof}
Let us denote $\mu=\lambda_{\max}(A_S)$ and $\nu=\lambda_{\min}(A_S)$.
By Lemmas \ref{lemma:rayleighquoientequation} and \ref{lemma:residualequation}, 
identity \eqref{eqn:generalresidual1} can be equivalently rewritten as
\begin{eqnarray*} 
\left[\mu-\rho(x,A_S)\right]
\left[\rho(x,A_S)-\nu\right]&=&\frac{\|r(x,A_S)\|^2}{\|x\|^2}.
\end{eqnarray*}
Since $\langle r(x,A_S),x\rangle=0$, we have $\|r(x,A_S)\|^2=\|A_S x\|^2-\rho^2(x,A_S)\|x\|^2$ and thus
\begin{eqnarray*}
\frac{\|r(x,A_S)\|^2}{\|x\|^2}-(\mu-\rho(x,A_S))(\rho(x,A_S)-\nu)&=& 
\frac{\|A_S x\|^2}{\|x\|^2}-(\mu+\nu)\rho(x,A_S)+\mu\nu\\
&=&
\frac{\left\langle A_S x-\mu x,A_Sx-\nu x\right\rangle}{\|x\|^2}\\
&=&0,
\end{eqnarray*}
where $(A_S -\mu)(A_S-\nu)=0$ is the minimal polynomial of $A_S$, 
since $\dim\,S=2$ and $\mu$ and $\nu$ are the eigenvalues of $A_S.$
\end{proof}

We also mention the following identity, relying on eigenvectors $u_i$ of $A_S$. 
\begin{lemma}
\label{lemma:generalresidual2}
Let $x\in S$ and $\dim\,S=2$. 
Then 
\begin{eqnarray} \label{eqn:generalresidual2}
\frac{1}{2}\left[\lambda_{\max}\left(A_S\right)-\lambda_{\min}\left(A_S\right)\right]
\sin\left(2\angle\left\{x,u_i\right\}\right)
&=&\frac{\|P_{S}r\left(x\right)\|}{\|x\|},\, i=1,\,2.
\end{eqnarray}
\end{lemma}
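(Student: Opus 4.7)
The plan is to combine the two-dimensional identities \eqref{eqn:rem2D1} with the residual identity of Lemma \ref{lemma:generalresidual1}. Setting $\mu = \lambda_{\max}(A_S)$ and $\nu = \lambda_{\min}(A_S)$, I would multiply the two relations in \eqref{eqn:rem2D1} to obtain
\begin{equation*}
[\mu - \rho(x)][\rho(x) - \nu] = (\mu-\nu)^2 \sin^2(\angle\{x,u_1\}) \sin^2(\angle\{x,u_2\}).
\end{equation*}
By \eqref{eqn:rem2D2}, the factor $\sin(\angle\{x,u_2\})$ equals $\cos(\angle\{x,u_1\})$, so the right-hand side collapses via the double-angle formula to $\tfrac14 (\mu-\nu)^2 \sin^2(2\angle\{x,u_1\})$. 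The same substitution in the other direction gives $\tfrac14 (\mu-\nu)^2 \sin^2(2\angle\{x,u_2\})$, which shows that the identity to be proved is symmetric in $i=1,2$ before invoking anything about residuals.

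Next, I would apply Lemma \ref{lemma:generalresidual1}, which says that the left-hand side equals $\|P_S r(x)\|^2/\|x\|^2$. Combining with the computation above yields
\begin{equation*}
\frac{\|P_S r(x)\|^2}{\|x\|^2} = \frac{1}{4}(\mu-\nu)^2 \sin^2(2\angle\{x,u_i\}), \quad i=1,2.
\end{equation*}
Taking square roots gives the claimed identity, provided I justify the sign conventions: $\mu - \nu \geq 0$ by definition, and $\angle\{x,u_i\} \in [0,\pi/2]$ is acute so $2\angle\{x,u_i\} \in [0,\pi]$ and $\sin(2\angle\{x,u_i\}) \geq 0$.

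This is really a one-step corollary of Lemma \ref{lemma:generalresidual1} combined with \eqref{eqn:rem2D1}–\eqref{eqn:rem2D2}, so I do not anticipate a genuine obstacle. The only delicate point is the degenerate case where one of $P_i x$ vanishes: then the corresponding angle is $0$ or $\pi/2$, $x$ is an eigenvector of $A_S$, both sides of the identity equal $0$, and everything still works — but I would mention this briefly to be safe, since \eqref{eqn:rem2D1} was introduced under the nontriviality assumption $P_i x \neq 0$.
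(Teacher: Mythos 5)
Your proof is correct and follows essentially the same route as the paper: substitute the two relations of \eqref{eqn:rem2D1} into \eqref{eqn:generalresidual1}, take the square root, and apply \eqref{eqn:rem2D2} together with the double-angle formula. Your extra remarks on the sign conventions and the degenerate case $P_i x = 0$ are harmless additions to what is otherwise the paper's own argument.
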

\begin{proof}
Applying \eqref{eqn:rem2D1} in \eqref{eqn:generalresidual1}, and taking the square root, we get 
\[
(\mu-\nu)\sin\left(\angle\left\{x,u_1\right\}\right)\sin\left(\angle\left\{x,u_2\right\}\right)=
\frac{\|P_{S}r\left(x\right)\|}{\|x\|}.
\]
By \eqref{eqn:rem2D2}, $\sin\left(\angle\left\{x,u_2\right\}\right)=\cos\left(\angle\left\{x,u_1\right\}\right)$, so 
the statement of the lemma for $i=1$ follows from the trigonometry identity for the sine of a double angle. 
Identities \eqref{eqn:rem2D2} also imply $\sin\left(2\angle\left\{x,u_1\right\}\right)=\sin\left(2\angle\left\{x,u_2\right\}\right)$, 
even though it looks counterintuitive. 
\end{proof}

\subsection{Identities for the absolute change in the RQ}
Here we prove our main tangent- and sine-based identities. 

{\bf
\emph{In the remainder of the paper, for given linearly independent vectors $x$ and $y$ in $\H$ we always 
define the subspace $S$ as $S=\span\{x, y\}$.}  
}

\begin{remark} \label{rem:i} 
Let $0<\angle\left\{x,y\right\}< \pi/2$. We note the following useful identities 
\[
\frac{\|r(x)\|}{\|x\|}\frac{\cos\left(\angle\left\{r(x),y\right\}\right)}
{\cos\left(\angle\left\{x,y\right\}\right)}=
\frac{|\left\langle r(x),y\right\rangle|}
{|\left\langle x,y\right\rangle|}=
\frac{|\left\langle P_S r(x),y\right\rangle|}
{|\left\langle x,y\right\rangle|}=
\frac{\|P_S r(x)\|}{\|x\|}\frac{\cos\left(\angle\left\{P_S r(x),y\right\}\right)}
{\cos\left(\angle\left\{x,y\right\}\right)}
\]
and
\[
\frac{\|P_S r(x)\|}{\|x\|}\frac{\cos\left(\angle\left\{P_S r(x),y\right\}\right)}
{\cos\left(\angle\left\{x,y\right\}\right)}=
\frac{\|P_{S}r\left(x\right)\|}{\|x\|}
\tan\left(\angle\left\{x,y\right\}\right),
\]
where
$\cos\left(\angle\left\{P_S r(x),y\right\}\right)=\sin\left(\angle\left\{x,y\right\}\right)$.
Indeed, $0=\left\langle r(x),x \right\rangle= \left\langle P_S r(x),x \right\rangle$, 
i.e.,\ vectors $P_S r(x)\in S$ and $x\in S$ are orthogonal in $S$, and also $y\in S,$ where
$\dim\,S=2.$ 
Denoting the orthogonal projector on the subspace $\span\{y\}$ by $P_y$ and using $\angle\left\{P_y r(x),y\right\}=0,$ we also get  
\[
 \frac{|\left\langle r(x),y\right\rangle|}
{|\left\langle x,y\right\rangle|}=
\frac{|\left\langle P_y r(x),y\right\rangle|}
{|\left\langle x,y\right\rangle|}=
\frac{\|P_y r(x)\|}{\|x\|}\frac{\cos\left(\angle\left\{P_y r(x),y\right\}\right)}
{\cos\left(\angle\left\{x,y\right\}\right)}=
\frac{\|P_y r(x)\|}{\|x\|\cos\left(\angle\left\{x,y\right\}\right)}.
\]
\end{remark}
 
\begin{theorem}
\label{lemma:generalsumtan}
Let $0<\angle\left\{x,y\right\}< \pi/2$ 
and let us denote 
\[
\Xi_\pm=
\left|\frac{\|P_{S}r\left(x\right)\|}{\|x\|}\pm
\frac{\|P_{S}r\left(y\right)\|}{\|y\|}\right|
\tan\left(\angle\left\{x,y\right\}\right).
\]
We always have 
\begin{equation}\label{eq:t37}
\Xi_{-}\leq |\rho(x)-\rho(y)| \leq \Xi_{+}.
\end{equation}
Now, let us denote 
$a=\left\langle x,r(y)\right\rangle,\, b=\left\langle r(x),y\right\rangle,$
and $c=\left\langle x,y\right\rangle\neq0$ and 
assume that 
$a/c$ or $b/c$ 
is real. Then
$
|\rho(x)-\rho(y)|=\Xi_{-}
$ 
if 
$b\neq0$ and $a/b\geq0$ (or $a\neq0$ and $b/a\geq0$ );
otherwise,  
$
 |\rho(x)-\rho(y)|=\Xi_{+}.
$
\end{theorem}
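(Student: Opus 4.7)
The approach is to reduce everything to the two-dimensional subspace $S=\span\{x,y\}$, convert the self-adjointness of $A_S$ into a single scalar identity relating $\rho(x)-\rho(y)$ to the three scalars $a,b,c$, and then conclude by applying the triangle inequality in the complex plane. First I would set $S=\span\{x,y\}$, so $\dim S=2$; Lemmas~\ref{lemma:rayleighquoientequation} and~\ref{lemma:residualequation} then give the decompositions $A_Sx=\rho(x)x+P_Sr(x)$ and $A_Sy=\rho(y)y+P_Sr(y)$. Since $A_S$ is self-adjoint on $S$ and $x,y\in S$, I would write out $\langle x,A_Sy\rangle=\langle A_Sx,y\rangle$, substitute these two decompositions, and use the reality of $\rho(x),\rho(y)$ together with $\langle P_Sr(x),y\rangle=\langle r(x),y\rangle=b$ and $\langle x,P_Sr(y)\rangle=\langle x,r(y)\rangle=a$ to collapse everything to the clean identity $(\rho(x)-\rho(y))\,c=a-b$.

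The two-sided bound \eqref{eq:t37} then follows by dividing by $|c|>0$ and applying the two ends of the triangle inequality $\bigl||a|-|b|\bigr|\le|a-b|\le|a|+|b|$. The only remaining step is to recognize $|b|/|c|$ and $|a|/|c|$ as the two summands forming $\Xi_\pm$, which is exactly the computation already recorded in Remark~\ref{rem:i}: it rewrites $|\langle r(x),y\rangle|/|\langle x,y\rangle|=(\|P_Sr(x)\|/\|x\|)\tan(\angle\{x,y\})$ and symmetrically for $|a|/|c|$, so the two triangle-inequality estimates become exactly $\Xi_-$ and $\Xi_+$.

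For the equality characterization I would exploit that $(a-b)/c=\rho(x)-\rho(y)\in\R$, so $a/c$ and $b/c$ differ by a real number; hence if either of them is real, both are. Writing $\tilde a=a/c,\tilde b=b/c\in\R$, the identity reads $\rho(x)-\rho(y)=\tilde a-\tilde b$, and the usual case analysis of the real triangle inequality yields $|\rho(x)-\rho(y)|=\Xi_-$ precisely when $\tilde a\tilde b\ge 0$, equivalently $a/b\ge 0$ with $b\ne 0$ (or $b/a\ge 0$ with $a\ne 0$), and $|\rho(x)-\rho(y)|=\Xi_+$ otherwise. The main obstacle I foresee is the complex bookkeeping: the derivation of $(\rho(x)-\rho(y))c=a-b$ must keep track of conjugations in the inner product, and translating the real-triangle-inequality sign condition back to a statement about $a,b,c$ via $a/b$ and $b/a$ deserves care in the degenerate subcases $a=0$ or $b=0$, where $\Xi_-$ and $\Xi_+$ happen to coincide so both identities trivially hold.
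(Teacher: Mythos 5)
Your proposal is correct and follows essentially the same route as the paper: derive the scalar identity $(\rho(x)-\rho(y))\,c=a-b$ (the paper expands $(\rho(x)-\rho(y))\langle x,y\rangle$ directly and cancels $\langle x,Ay\rangle-\langle Ax,y\rangle$ by self-adjointness, which is the same computation as your $\langle x,A_Sy\rangle=\langle A_Sx,y\rangle$ substitution), apply the complex triangle inequality $\bigl||a|-|b|\bigr|\le|a-b|\le|a|+|b|$, convert $|a|/|c|$ and $|b|/|c|$ via Remark~\ref{rem:i}, and settle the equality cases by noting that $(a-b)/c$ is real so $a/c$ real iff $b/c$ real. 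Your handling of the degenerate subcases $a=0$ or $b=0$ matches the paper's observation that one may assume at least one of $P_Sr(x),P_Sr(y)$ is nonzero, since otherwise both sides vanish.
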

\begin{proof}
We consider only the nontrivial case where at least one of the vectors $x$ and $y$ 
is not an eigenvector of $A_S$.  Let it be, e.g.,\ $y$, so that the set $y$ and $P_Sr(y)\neq0$ form an orthogonal basis 
of the subspace $S$. Then  $a=\left\langle x,r(y)\right\rangle=\left\langle x,P_Sr(y)\right\rangle\neq0$, 
since $\angle\left\{x,y\right\}\neq0$. 
By elementary calculations, we obtain
 \begin{eqnarray*} 
 \left(\rho(x)-\rho(y)\right)\left\langle x,y\right\rangle&=&
\left\langle \rho(x) x,y\right\rangle-\left\langle x,\rho(y)y\right\rangle
+\left\langle x,Ay\right\rangle-\left\langle Ax,y\right\rangle\\
&=&\left\langle x,r(y)\right\rangle-\left\langle r(x),y\right\rangle,
  \end{eqnarray*}
thus $\rho(x)-\rho(y)=(a-b)/c$ and 
$|\rho(x)-\rho(y)|=|a-b|/|c|$.
In a complex space, the RQ remains real, since $A$ is Hermitian, but the 
scalar products $a,\,b,$ and $c$ may be not real, so in general we have, 
by the triangle inequality for complex scalars, that
$\big|\,|a|-|b|\big|\leq|a-b|\leq|a|+|b|$,
where the inequalities are strict unless $b/a$ is a real number. 
On the other hand,  $\rho(x)-\rho(y)=(a-b)/c$ is always real, so 
 $b/a$ is real iff $a/c$ or $b/c$ is real, which is the second assumption of the theorem. 
Under this assumption, 
$\big|\,|a|-|b|\big|=|a-b|$, if $b/a\geq0$, or  
$|a-b|=|a|+|b|$,  if $b/a\leq0$,
which completely characterizes the cases where inequalities in \eqref{eq:t37} turn into equalities. 

The statements of the theorem now follow directly from Remark \ref{rem:i}.
\end{proof}

\begin{lemma}
Let  $\angle\{x,y\}>0$.
For both $i=1$ and $2$, we have 
\begin{equation}\label{eqn:sin1}
|\rho(x)-\rho(y)|=
[\mu-\nu]
\sin\left(\angle\left\{x,u_i\right\}+\angle\left\{y,u_i\right\}\right)\left|\sin\left(\angle\left\{x,u_i\right\}-\angle\left\{y,u_i\right\}\right)\right|.
\end{equation}
\end{lemma}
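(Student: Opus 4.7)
The plan is to reduce the statement directly to the two basic identities in \eqref{eqn:rem2D1}, because those already express $\mu-\rho(x)$, $\mu-\rho(y)$, $\rho(x)-\nu$, and $\rho(y)-\nu$ as $(\mu-\nu)$ times $\sin^2$ of angles with the $A_S$-eigenvectors $u_i$. Since $\mu=\lambda_{\max}(A_S)$ and $\nu=\lambda_{\min}(A_S)$ do not depend on $x$ or $y$ (they depend only on $S=\span\{x,y\}$), subtracting the two versions of \eqref{eqn:rem2D1} gives a clean expression for $\rho(x)-\rho(y)$.

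Concretely, first I would take $i=1$ and write, from \eqref{eqn:rem2D1},
\[
\mu-\rho(x)=(\mu-\nu)\sin^2(\angle\{x,u_1\}),\quad \mu-\rho(y)=(\mu-\nu)\sin^2(\angle\{y,u_1\}),
\]
so that subtraction yields
\[
\rho(x)-\rho(y)=(\mu-\nu)\bigl[\sin^2(\angle\{y,u_1\})-\sin^2(\angle\{x,u_1\})\bigr].
\]
Then I would apply the elementary trigonometric factorization
\[
\sin^2 A-\sin^2 B=\sin(A+B)\sin(A-B)
\]
with $A=\angle\{y,u_1\}$ and $B=\angle\{x,u_1\}$, and take absolute values. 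Since both angles are acute (they lie in $[0,\pi/2]$), their sum lies in $[0,\pi]$, so $\sin(\angle\{x,u_1\}+\angle\{y,u_1\})\geq 0$ and only the difference factor needs the absolute-value sign. This gives \eqref{eqn:sin1} for $i=1$. The case $i=2$ is obtained identically by subtracting the two second halves of \eqref{eqn:rem2D1} (or, alternatively, using \eqref{eqn:rem2D2}, which gives $\angle\{x,u_2\}=\pi/2-\angle\{x,u_1\}$ and $\angle\{y,u_2\}=\pi/2-\angle\{y,u_1\}$, under which the sum of angles is preserved up to $\pi$ and the difference flips sign, leaving both factors invariant after taking the absolute value).

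A mild bookkeeping point I would flag: \eqref{eqn:rem2D1} was derived assuming $P_ix\neq 0$, but when $P_ix=0$ the angle $\angle\{x,u_i\}$ simply equals $\pi/2$ and the corresponding identity in \eqref{eqn:rem2D1} still holds as a direct computation ($\rho(x)$ then equals the other eigenvalue), so no separate degenerate case appears. The hypothesis $\angle\{x,y\}>0$ is used only to guarantee $\dim S=2$ so that $\mu$ and $\nu$ really are two eigenvalues of $A_S$.

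There is essentially no obstacle here: the only nontrivial ingredient is the trigonometric factorization $\sin^2 A-\sin^2 B=\sin(A+B)\sin(A-B)$, and the only subtlety is ensuring the sign conventions so that exactly one absolute value appears on the right-hand side. The content of the lemma is therefore a one-line consequence of \eqref{eqn:rem2D1}, and the proof should fit in a few lines.
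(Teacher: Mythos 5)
Your proposal is correct and follows essentially the same route as the paper: the paper likewise subtracts the two instances of \eqref{eqn:rem2D1} (written via $\cos^2$ using \eqref{eqn:rem2D2}, which only flips the sign inside the absolute value) and then invokes the factorization $\sin^2 A-\sin^2 B=\sin(A+B)\sin(A-B)$ as the ``elementary trigonometry'' step. Your sign bookkeeping for why only the difference factor carries the absolute value, and the handling of $i=2$, are both sound.
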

\begin{proof}
Identities \eqref{eqn:rem2D1} and \eqref{eqn:rem2D2} 
imply
\begin{eqnarray*}
|\rho\left(x\right)-\rho\left(y\right)|&=&
[\mu-\nu]\left|\cos^2\left(\angle\left\{x,u_i\right\}\right)-\cos^2\left(\angle\left\{y,u_i\right\}\right)\right|,
\end{eqnarray*}
which leads directly to identity \eqref{eqn:sin1} using elementary trigonometry.
\end{proof}

\begin{theorem}
\label{lemma:generalsine}
 Let $\angle\{x,y\}>0$.
Let us for $i=1$ or $2$ denote 
\[
\Psi_\pm=
\left[\lambda_{\max}\left(A_S\right)-\lambda_{\min}\left(A_S\right)\right]
\left|\sin\left(\angle\left\{x,u_i\right\}\pm\angle\left\{y,u_i\right\}\right)\right|
\sin\left(\angle\left\{x,y\right\}\right),
\]
and let 
$C=\left\langle x, u_1\right\rangle \left\langle u_2, x\right\rangle \left\langle u_1, y\right\rangle \left\langle y, u_2\right\rangle$.
We always have
$
\Psi_{-}\leq |\rho(x)-\rho(y)| \leq \Psi_{+}.
$ 
If the scalar $C$ is not real then both inequalities on the left and right are strict.
Moreover, if $C\ge0$ ($C\le0$) then we have equality for the lower (upper) bound.
\end{theorem}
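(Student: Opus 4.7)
The plan is to combine the identity $|\rho(x)-\rho(y)| = [\mu-\nu]\sin(\phi+\psi)\,|\sin(\phi-\psi)|$ supplied by the preceding lemma (writing $\phi:=\angle\left\{x,u_i\right\}$, $\psi:=\angle\left\{y,u_i\right\}$, $\mu:=\lambda_{\max}(A_S)$, $\nu:=\lambda_{\min}(A_S)$) with a direct coordinate computation of $\sin(\angle\left\{x,y\right\})$ in the orthonormal eigenbasis $\{u_1,u_2\}$ of $A_S$. Since both $\Psi_\pm$ and $|\rho(x)-\rho(y)|$ carry the common nonnegative factor $[\mu-\nu]$, the theorem reduces to sandwiching $\sin(\angle\left\{x,y\right\})$ between $|\sin(\phi-\psi)|$ and $\sin(\phi+\psi)$, with the precise location inside that interval controlled by $C$.

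To obtain the sandwich I would write $x=c_1u_1+c_2u_2$ and $y=d_1u_1+d_2u_2$ and use $|c_1|/\|x\|=\cos\phi$, $|c_2|/\|x\|=\sin\phi$ (and the corresponding relations for $d_i$). Expanding $|\langle x,y\rangle|^2$ produces a cross term which, regardless of the sesquilinearity convention, equals $2\,\Re(C)$ for $C$ as in the statement, while $|C|/(\|x\|^2\|y\|^2)=\cos\phi\sin\phi\cos\psi\sin\psi$. Writing $\Re(C)=|C|\cos\theta$ with $\theta:=\arg C$ and invoking the elementary identity $\sin(2\phi)\sin(2\psi)=\sin^2(\phi+\psi)-\sin^2(\phi-\psi)$, a short rearrangement produces the convex combination
\[
\sin^2(\angle\left\{x,y\right\})=\sin^2(\theta/2)\,\sin^2(\phi+\psi)+\cos^2(\theta/2)\,\sin^2(\phi-\psi).
\]

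Every claim of the theorem drops out of this single identity. Because $\phi,\psi\in[0,\pi/2]$, one has $0\le|\sin(\phi-\psi)|\le\sin(\phi+\psi)$, so $\sin(\angle\left\{x,y\right\})$ is pinched between these two nonnegative values; multiplying by $[\mu-\nu]\sin(\phi+\psi)$ yields $|\rho(x)-\rho(y)|\le\Psi_+$, while multiplying by $[\mu-\nu]|\sin(\phi-\psi)|$ yields $\Psi_-\le|\rho(x)-\rho(y)|$. The weights $\sin^2(\theta/2)$ and $\cos^2(\theta/2)$ lie in the open interval $(0,1)$ precisely when $\theta\notin\{0,\pi\}$, i.e.,\ exactly when $C$ is not real, establishing the strict-inequality assertion. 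When $C$ is real, the convex combination collapses onto $\sin^2(\phi-\psi)$ (for $\theta=0$) or onto $\sin^2(\phi+\psi)$ (for $\theta=\pi$); substituting back into $\Psi_\pm$ then identifies directly which of the two bounds is attained.

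The main hazard I anticipate is purely notational: keeping the conjugation convention straight so that the cross term in $|\langle x,y\rangle|^2$ really matches $2\,\Re(C)$ for $C$ in the stated form, and handling the degenerate case $C=0$ separately, where one of $x,y$ is an eigenvector of $A_S$, forcing $\sin(\phi+\psi)=|\sin(\phi-\psi)|$ and hence $\Psi_+=\Psi_-=|\rho(x)-\rho(y)|$ automatically. Independence of the bound from the choice of $i\in\{1,2\}$ is immediate from \eqref{eqn:rem2D2}, which replaces $(\phi,\psi)$ by $(\pi/2-\phi,\pi/2-\psi)$ and leaves both $\sin(\phi+\psi)$ and $|\sin(\phi-\psi)|$ invariant.
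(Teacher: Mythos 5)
Your proof is correct and follows essentially the same route as the paper's: reduce via the identity $|\rho(x)-\rho(y)|=[\mu-\nu]\sin(\phi+\psi)\,|\sin(\phi-\psi)|$ to sandwiching $\sin\left(\angle\left\{x,y\right\}\right)$, then expand in the eigenbasis of $A_S$ and control the cross term $2\Re(C)$ against $\pm|C|$. Your convex-combination identity with weights $\sin^2(\theta/2)$ and $\cos^2(\theta/2)$ is just a repackaging of the paper's inequality $-|C|\le\Re(C)\le|C|$ (together with $\sin^2(\phi+\psi)-\sin^2(\phi-\psi)=4|C|$), so the two arguments are the same computation.
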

\begin{proof}
 Identity \eqref{eqn:sin1} makes the statement of the theorem equivalent to 
\begin{equation} \label{eqn:r2}
|\sin\left(\angle\left\{x,u_i\right\}-\angle\left\{y,u_i\right\}\right)|\leq
\sin\angle\left\{x,y\right\}\leq\sin\left(\angle\left\{x,u_i\right\}+\angle\left\{y,u_i\right\}\right).
\end{equation}
For simplicity of notation, let us assume, without loss of generality, 
that all vectors involved are normalized, i.e.,\ $\|u_1\|=\|u_2\|=\|x\|=\|y\|=1$.
Using the representations of the vectors $x$ and $y$ with respect to the orthonormal basis $u_1$ and $u_2$ for the subspace $S$, we obtain by 
direct computation 
\[
\sin^2\angle\left\{x,y\right\}=\cos^2\angle\left\{x,u_1\right\} \cos^2\angle\left\{y,u_2\right\}+
\cos^2\angle\left\{x,u_2\right\} \cos^2\angle\left\{y,u_1\right\}-2\Re(C)
\]
and 
\[
|C|=\cos\angle\left\{x,u_1\right\} \cos\angle\left\{x,u_2\right\}
\cos\angle\left\{y,u_1\right\} \cos\angle\left\{y,u_2\right\}.
\]
Evidently, $-|C| \le \Re(C) \le |C|$, and if $C\ge0$ ($C\le0$) then $\Re(C) = |C|$ ($\Re(C) = -|C|$), 
which proves bounds \eqref{eqn:r2} and also gives conditions for their sharpness. 
\end{proof}
\begin{remark}
An alternative proof of Theorem \ref{lemma:generalsine} is based on the fact that 
angles between subspaces describe a metric, see, e.g.\ \cite{angles2010} and references there, so
\begin{equation}\label{eqn:sin2}
\left|\angle\left\{x,u\right\}-\angle\left\{y,u\right\}\right|\leq
\angle\left\{x,y\right\}\leq\left|\angle\left\{x,u\right\}+\angle\left\{y,u\right\}\right|.
\end{equation}  
\end{remark}

Theorem \ref{lemma:generalsine} trivially implies the bound 
\[
|\rho(x)-\rho(y)| \leq  
\left[\lambda_{\max}\left(A_S\right)-\lambda_{\min}\left(A_S\right)\right]
\sin\left(\angle\left\{x,y\right\}\right)
\]
that can be found for the real case in \cite{ka03}.

Finally, we apply our results above to an important special case.
\begin{corollary}\label{lemma:sinesquaren}
Let $x$ be an eigenvector of $A$ and $\rho(x)=\lambda$.  Then
\begin{eqnarray} \label{eqn:sine2}
|\lambda-\rho(y)|&=&\left[\lambda_{\max}\left(A_S\right)-
\lambda_{\min}\left(A_S\right)\right]
\sin^2\left(\angle\left\{x,y\right\}\right).
\end{eqnarray}
If, in addition $\angle\left\{x,y\right\}< \pi/2$, then
\begin{eqnarray} \label{eqn:tan}
|\lambda-\rho(y)|&=&\frac{\|P_{S}r\left(y\right)\|}{\|y\|}
\tan\left(\angle\left\{x,y\right\}\right).
\end{eqnarray}
If also $\eta\neq\lambda$ denotes an eigenvalue of $A_S$, then
\begin{eqnarray} \label{eqn:tanres}
\tan\left(\angle\left\{x,y\right\}\right)&=&\frac1{\left|\eta-\rho(y)\right|}
\frac{\|P_{S}r\left(y\right)\|}{\|y\|}.
\end{eqnarray}
\end{corollary}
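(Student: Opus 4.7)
The plan is to exploit the fact that, by Corollary~\ref{cor:eigenvector}, the eigenvector $x$ of $A$ lying in $S$ is also an eigenvector of $A_S$ with the same eigenvalue $\lambda=\rho(x)$. Since $\dim S=2$ and $A_S$ is self-adjoint on $S$, the eigenvectors $u_1,u_2$ of $A_S$ are orthogonal, so up to relabeling we may take $x=u_i$ for some $i\in\{1,2\}$, i.e.\ $\angle\{x,u_i\}=0$. All three identities then fall out by specializing the general results of Section~\ref{section2} to this configuration.

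For \eqref{eqn:sine2}, I would plug $\angle\{x,u_i\}=0$ into the sine-product identity \eqref{eqn:sin1}. The factor $\sin(\angle\{x,u_i\}+\angle\{y,u_i\})\,|\sin(\angle\{x,u_i\}-\angle\{y,u_i\})|$ collapses to $\sin^2(\angle\{y,u_i\})$, and since $u_i=x$ we have $\angle\{y,u_i\}=\angle\{x,y\}$. This yields \eqref{eqn:sine2} immediately. (Equivalently, one could invoke Theorem~\ref{lemma:generalsine} and note that the relevant scalar $C$ has a factor $\langle x,u_2\rangle=0$, forcing $C=0$ and thus equality in both the $\Psi_-$ and $\Psi_+$ bounds, which then coincide.)

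For \eqref{eqn:tan}, I would apply Theorem~\ref{lemma:generalsumtan}. The crucial simplification is that $r(x)=Ax-\lambda x=0$, so $P_S r(x)=0$ and the quantities $\Xi_+$ and $\Xi_-$ coincide and reduce to $(\|P_S r(y)\|/\|y\|)\tan(\angle\{x,y\})$; the theorem then forces $|\rho(x)-\rho(y)|$ to equal this common value.

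For \eqref{eqn:tanres}, I would combine \eqref{eqn:tan} with the residual identity of Lemma~\ref{lemma:generalresidual1} applied to $y$. Since $\lambda$ is an eigenvalue of $A_S$ and $\eta$ is the other one, the product $[\lmax(A_S)-\rho(y)][\rho(y)-\lmin(A_S)]$ factors as $|\lambda-\rho(y)|\cdot|\eta-\rho(y)|$, so Lemma~\ref{lemma:generalresidual1} reads $|\lambda-\rho(y)|\,|\eta-\rho(y)|=\|P_S r(y)\|^2/\|y\|^2$. Dividing by the expression for $|\lambda-\rho(y)|$ from \eqref{eqn:tan} and solving for $\tan(\angle\{x,y\})$ yields \eqref{eqn:tanres}. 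The only mild technical point to address is the nondegeneracy needed to divide: if $\lambda=\eta$ then $A_S$ would be a multiple of the identity on $S$, contradicting the assumption $\eta\neq\lambda$; and if $\|P_S r(y)\|=0$ together with $\angle\{x,y\}\in(0,\pi/2)$, then $y$ would also be an eigenvector of $A_S$ with eigenvalue $\lambda$, forcing $y$ to be parallel to $x$, again a contradiction. There is no real obstacle here; the whole corollary is essentially bookkeeping once one recognizes that $x$ itself plays the role of one of the $u_i$.
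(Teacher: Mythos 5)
Your proof is correct and follows essentially the same route as the paper: identify $x$ with an eigenvector $u_i$ of $A_S$ via Corollary~\ref{cor:eigenvector}, specialize the sine- and tangent-based results of section~\ref{section2} (where $r(x)=0$ collapses $\Xi_\pm$ to a single value and $\angle\{x,u_i\}=0$ collapses the sine identity to $\sin^2(\angle\{x,y\})$), and divide identity \eqref{eqn:generalresidual1} by \eqref{eqn:tan} to obtain \eqref{eqn:tanres}. One microscopic slip in your nondegeneracy aside: if $P_S r(y)=0$ with $0<\angle\{x,y\}<\pi/2$, the eigenvalue of $y$ could be $\eta$ rather than $\lambda$, which forces $y\perp x$ and hence $\angle\{x,y\}=\pi/2$ --- still a contradiction, so nothing breaks.
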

\begin{proof}
Since $x$ is an eigenvector of $A$ it is also an eigenvector of $A_S$, 
with the same eigenvalue $\rho(x)=\lambda$, by Corollary \ref{cor:eigenvector}.
The statements \eqref{eqn:sine2} and \eqref{eqn:tan} follow directly 
from Theorems \ref{lemma:generalsumtan} and \ref{lemma:generalsine}, correspondingly. 
Identity \eqref{eqn:tanres} is derived from \eqref{eqn:generalresidual1} and \eqref{eqn:tan}.
Finally, the assumption $\angle\{x,y\}>0$ is dropped, since in the case $\angle\{x,y\}=0$
all the statements trivially hold, where both sides vanish. 
\end{proof}

The next section is entirely based on Corollary \ref{lemma:sinesquaren}. 
In the rest of the paper, we do not use our identities and bounds for $|\rho(x)-\rho(y)|$ for arbitrary 
nonzero vectors $x$ and $y$. 
We want to highlight again, that these results have merit on their own, 
not just as a tool for derivation of eigenvalue error bounds. 


\section{Deriving some known eigenvalue error bounds using our identities}\label{section3}
We now easily derive two known bounds, reviewed in section \ref{review}, 
from the results obtained in section \ref{section2}. 
First, since $\Sigma_{\min}\left(A\right)\leq
\lambda_{\min}\left(A_S\right)\leq\lambda_{\max}\left(A_S\right)\leq\Sigma_{\max}\left(A\right)$, 
from \eqref{eqn:sine2} we obtain a priori bound \eqref{eq:sinsquare1},  
\begin{eqnarray*} 
|\lambda-\rho(y)|&=&\left[\lambda_{\max}\left(A_S\right)
-\lambda_{\min}\left(A_S\right)\right]
\sin^2\left(\angle\left\{x,y\right\}\right)\\
                 &\leq&\left[\Sigma_{\max}(A)-\Sigma_{\min}(A)\right]
                 \sin^2\left(\angle\left\{x,y\right\}\right).
\end{eqnarray*}
The inequality becomes an equality if $\Sigma_{\max}(A)$ and $\Sigma_{\min}(A)$ are 
eigenvalues of $A_S$. For details, see reference \cite{ka03}.

The second known result, mixed bound \eqref{eq:tanresidual1}, follows from \eqref{eqn:tan}, 
\begin{eqnarray*} 
|\lambda-\rho(y)|&=&\frac{\|P_{S}r\left(y\right)\|}{\|y\|}
\tan\left(\angle\left\{x,y\right\}\right)\\
                 &\leq&\frac{\|r\left(y\right)\|}{\|y\|}
                 \tan\left(\angle\left\{x,y\right\}\right).
\end{eqnarray*}
The inequality turns into an equality iff $P_S r(y)=r(y)$, i.e.,\ iff
$Ay\in\span\{x, y\}$, which is equivalent to the subspace $\span\{x, y\}$
being $A$-invariant. 
More detailed information about the quality of this bound is determined by 
Remark~\ref{rem:i} where vectors $x$ and $y$ are swapped to give us the following identity
\[
{\|r(y)\|}{\cos\left(\angle\left\{r(y),x\right\}\right)}=
{\|P_S r(y)\|}{\cos\left(\angle\left\{P_S r(y),x\right\}\right)}.
\]

\begin{remark}\label{rem:unbounded}
 Our assumption that the operator $A$ is bounded is necessary in bound \eqref{eq:sinsquare1} and 
can be used in bound \eqref{eq:tanresidual1} to guarantee the existence of the vector $Ay$.  
Our results of  section \ref{section2} formally speaking would hold 
for an unbounded operator $A$ and given vectors $x$ and $y$, as soon as 
the operator $A_S$ can be correctly defined by $A_S=\left(P_SA\right)|_S$, where $S=\span\{x,y\}$.
One could do even better than that, in fact, one only needs to be able to correctly define the 
values $\rho(x)$ and $\rho(y)$. 

Rather than getting into technical details, we illustrate such a possibility 
using the following example from Davis and Kahan \cite{dk70}.
Let $\epsilon=1/2$ and $\H=l_2.$ We take 
$y=(1,\epsilon,\epsilon^2,\ldots)^T\in l_2$ and 
$A=\diag(1,\epsilon^{-1},\epsilon^{-2},\ldots)$.
We obtain $\rho(y)=1+\epsilon$ even though the  
sequence $Ay=(1,1,1,\ldots)^T$ has an infinite norm in $l_2$. 

Let us now consider $x=(1,0,0,\ldots)^T\in l_2$, the eigenvector
corresponding to the lowest eigenvalue $\rho(x)=1$, and 
define $w=y-x=(0,\epsilon,\epsilon^2,\ldots)^T\in l_2$
with $\rho(w)=(1+\epsilon)/\epsilon=3$. Then $A_S$ can be constructed  
using its eigenvectors $u_1=w$ and $u_2=x$ and the corresponding eigenvalues 
$\mu=\rho(w)=3$ and  $\nu=\rho(x)=1$ despite of the fact that the sequence 
$Aw=(0,1,1,\ldots)^T$ does not exist in $\H=l_2$. 

In the a priori bound, the eigenvalues of the operator $A_S$ are $1$ and $3$, 
and our result  \eqref{eqn:sine2} holds, while the bound \eqref{eq:sinsquare1} fails since 
$\Sigma_{\max}(A)=\infty.$ In the mixed bound, we have $P_S r(y)\in l_2$ in our \eqref{eqn:tan}, but 
$\|r\left(y\right)\|=\infty$ in \eqref{eq:tanresidual1}.
For details, see the end of the next section where we return to this example. 
\end{remark}

We have now demonstrated how two known bounds, a priori bound \eqref{eq:sinsquare1} and 
mixed bound \eqref{eq:tanresidual1}, can be derived in a simple and uniform manner 
from essentially two dimensional identities \eqref{eqn:sine2} and  \eqref{eqn:tan}.
The last well known bounds, we review in section \ref{review}, 
the a posteriori Temple bound \eqref{eqn:betaalphabound} and its follow-ups,
can also be derived in a similar way from their two dimensional prototype \eqref{eqn:generalresidual1}. 
However, the derivation is not so trivial, so we dedicate a separate section for it, coming next. 

\section{New a posteriori bounds for eigenvalues}\label{section4} For convenience, 
let us remind the reader of the Temple bound \eqref{eqn:betaalphabound},  
\begin{equation*} 
\left(\beta-\rho(y)\right)(\rho(y)-\alpha)\leq\frac{\|r(y)\|^2}{\|y\|^2},
\end{equation*}
where 
$\Sigma\left(A\right)\cap\left(\alpha, \beta\right)=\emptyset$, 
and its two dimensional analog \eqref{eqn:generalresidual1}, with $y$ substituted for $x$ for consistency of notation,
\begin{eqnarray*} 
\left[\lambda_{\max}\left(A_S\right)-\rho(y)\right]
\left[\rho(y)-\lambda_{\min}\left(A_S\right)\right]
&=&\frac{\|P_{S}r\left(y\right)\|^2}{\|y\|^2}.
\end{eqnarray*}
Our primary goal is to derive \eqref{eqn:betaalphabound} from \eqref{eqn:generalresidual1}. 
We first cover an important particular case, where we can even improve \eqref{eqn:betaalphabound}.
\begin{lemma} 
\label{lemma:projectorresidualsquare1}
Let $\alpha=\lambda_{\min}\left(A\right)$ be an isolated point of the spectrum 
of $A$ and $\beta>\alpha$ be the nearest larger point of the spectrum 
of $A$. Let $\X$ denote the complete eigenspace corresponding to 
the eigenvalue $\alpha,$ and $P_{\X}$ be the orthogonal projector onto $\X.$
For a vector $y\neq0$, such that $\alpha<\rho(y)<\beta$, we define 
$x=P_{\X}y$ and $S=\span\{x,y\}$. Then
 \begin{eqnarray}
 \label{eqn:largestresidualsqu} 
\left(\beta-\rho(y)\right)\left(\rho(y)-\alpha\right)&\leq&
\frac{\|P_{S}r(y)\|^2}{\|y\|^2},
\end{eqnarray}
where $P_{S}$ is the orthogonal projector on the subspace $S$.
\end{lemma}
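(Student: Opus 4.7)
The plan is to apply the two-dimensional identity \eqref{eqn:generalresidual1} from Lemma \ref{lemma:generalresidual1} to the vector $y$ on the subspace $S=\span\{x,y\}$, and then argue that the spectrum of the restricted operator $A_S$ is suitably positioned with respect to $\alpha$ and $\beta$. Concretely, I would show that $\lmin(A_S)=\alpha$ and $\lmax(A_S)\geq\beta$, after which the desired bound follows by monotonicity of the product $(t-\rho(y))(\rho(y)-s)$ in the two real factors.

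First I would verify that $S$ is genuinely two-dimensional. Since $\alpha$ is isolated and is the minimum of the spectrum, the part of $A$ acting on $\XP$ has spectrum contained in $[\beta,\infty)$, so for any nonzero $w\in\XP$ one has $\rho(w)\geq\beta$. The hypothesis $\rho(y)<\beta$ therefore forces $P_{\X}y\neq0$, i.e.\ $x\neq0$, and the hypothesis $\rho(y)>\alpha$ forces $y\notin\X$, so $x$ and $y$ are linearly independent and $\dim S=2$.

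Next I would locate the eigenvalues of $A_S$. Because $x\in\X$ is an eigenvector of $A$ with eigenvalue $\alpha$, Corollary \ref{cor:eigenvector} says $x$ is also an eigenvector of $A_S$ with the same eigenvalue. By the minimax characterization applied on $S\subset\H$, using Lemma \ref{lemma:rayleighquoientequation} to identify Rayleigh quotients of $A$ and $A_S$ on $S$, we get $\lmin(A_S)\geq\lmin(A)=\alpha$, so $\lmin(A_S)=\alpha$ exactly. The more delicate step — and the main obstacle — is identifying the other eigenvector $u_1$ of $A_S$. Since $A_S$ is self-adjoint on $S$, $u_1$ must be orthogonal to $x$ in $S$. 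A direct calculation gives $\langle y-x,x\rangle=\langle y,P_{\X}y\rangle-\|P_{\X}y\|^2=0$, so the one-dimensional orthogonal complement of $x$ in $S$ is spanned by $y-x = P_{\XP}y\in\XP$. Hence $u_1$ is parallel to a vector in $\XP$, and by the previous paragraph $\rho(u_1)\geq\beta$. Thus $\lmax(A_S)=\rho(u_1)\geq\beta$.

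Finally, I combine these facts with the key identity \eqref{eqn:generalresidual1} written for $y$:
\[
\bigl[\lmax(A_S)-\rho(y)\bigr]\bigl[\rho(y)-\lmin(A_S)\bigr]=\frac{\|P_{S}r(y)\|^2}{\|y\|^2}.
\]
Under $\alpha<\rho(y)<\beta\leq\lmax(A_S)$ both factors on the left are nonnegative, and $\lmax(A_S)-\rho(y)\geq\beta-\rho(y)$ while $\rho(y)-\lmin(A_S)=\rho(y)-\alpha$, yielding $(\beta-\rho(y))(\rho(y)-\alpha)\leq\|P_{S}r(y)\|^2/\|y\|^2$, which is \eqref{eqn:largestresidualsqu}. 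The only nontrivial ingredient is the orthogonality calculation that pins $u_1$ inside $\XP$; everything else is a direct invocation of the earlier lemmas.
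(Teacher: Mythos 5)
Your proposal is correct and follows essentially the same route as the paper's proof: verify $\dim S=2$ using $\rho(y)<\beta$ (to get $P_{\X}y\neq0$) and $\rho(y)>\alpha$ (to get $y\notin\X$), identify the two eigenvalues of $A_S$ as $\alpha$ (via the eigenvector $x\in\X$) and $\rho(y-x)\geq\beta$ (via $y-x\in S\cap\X^{\perp}$), and then substitute into identity \eqref{eqn:generalresidual1}. The paper's proof is the same argument, merely stating directly that $w=y-P_{\X}y$ is the second eigenvector of $A_S$ rather than deriving it from the orthogonality computation.
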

\begin{proof}
The assumption
$\rho(y)>\alpha$ implies $y\notin\X$ since $\alpha=\lambda_{\min}\left(A\right)$, so $x\neq y$.
If $x=0$ then $y\perp \X$, but since  $\X$ is the \emph{complete} eigenspace corresponding to 
the eigenvalue $\alpha,$ and  $\beta>\alpha$ is the nearest larger point of the spectrum,  
$y\perp \X$ implies $\rho(y)\geq\beta$, which contradicts the lemma assumption $\rho(y)<\beta$. 
Thus $\dim\,S=2$ since $0\neq x=P_{\X}y\neq y$, so our results of section \ref{section2} hold. 

The vector $x$ is an eigenvector of $A_S$, and 
$\rho(x)=\lambda_{\min}\left(A\right)=\lambda_{\min}\left(A_S\right)$. 
Let $\X^\perp$ denote the orthogonal complement to 
$\X$.
The vector $w=y-P_{\X}y\in S\cap\X^{\perp}$ is an eigenvector corresponding to the other eigenvalue of $A_S$, i.e.,\
$\rho(w)=\lambda_{\max}\left(A_S\right)$.
Since $\X$ is the complete eigenspace corresponding to 
the eigenvalue $\alpha=\lambda_{\min}\left(A\right)$, the fact that $w\in \X^{\perp}$ 
guarantees that $\beta\leq\rho(w)$. Therefore, collecting the bounds together, we get 
$ 
 \lambda_{\min}\left(A_S\right)=\alpha<\rho(y)<\beta\leq\rho(w)=\lambda_{\max}\left(A_S\right).
$ 
Substituting $\beta\leq\lambda_{\max}\left(A_S\right)$ for $\lambda_{\max}\left(A_S\right)$ 
in identity \eqref{eqn:generalresidual1} gives the desired bound.
\end{proof}

The Temple bound \eqref{eqn:betaalphabound}, for the case of the smallest eigenvalue, 
follows from \eqref{eqn:largestresidualsqu}, 
using $\|P_{S}r\left(y\right)\|\leq\|r\left(y\right)\|$. 
In Remark \ref{rem:unbounded} we have already discussed that having a bound based on 
 $\|P_{S}r\left(y\right)\|$ rather than on $\|r\left(y\right)\|$ may provide great advantages. 
At the end of this section we extend the arguments of Remark \ref{rem:unbounded} 
to investigate it. 

Of course, Lemma \ref{lemma:projectorresidualsquare1} can be easily reformulated for the case
of the opposite side of the spectrum of the operator $A.$ 
Returning our attention to our main goal, we can construct simple examples showing that bound 
\eqref{eqn:largestresidualsqu} does not hold if neither $\alpha$ nor $\beta$
corresponds to the extreme points of the spectrum of the operator $A$. 
The extension of bound \eqref{eqn:largestresidualsqu} 
to the general case is described below. We start with a technical lemma.
\begin{lemma} 
\label{lemma:technical}
Let $\rho(y)\notin\Sigma(A)$ for a given vector $y\neq0.$ 
Let us denote by $U$ 
the invariant subspace of $A$ associated with all of the elements of the
spectrum of $A$ larger than $\rho(y)$; and let us define 
\[V=U+\span\{y\}.\]
Then $\Sigma\left(A_{V}\right)= \rho(v) \cup \Sigma\left(A_{U}\right)$,
and the smallest point  $\Sigma_{\min}\left(A_{V}\right)<\rho(y)$ 
of the spectrum $\Sigma\left(A_{V}\right)$ is an isolated point---an~eigenvalue of multiplicity one. 
\end{lemma}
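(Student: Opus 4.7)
The plan is to exploit that $U$ is a reducing subspace for the self-adjoint operator $A$, so its orthogonal complement $U^\perp$ is also $A$-invariant, and then decompose $y$ along these complementary invariant subspaces. Concretely, write $y = y_U + v$ with $y_U \in U$ and $v \in U^\perp$. First I would verify that $v \neq 0$: if $v=0$, then $y \in U$, forcing $\rho(y) \ge \inf \Sigma(A|_U) > \rho(y)$, a contradiction. Consequently, $V = U + \span\{y\} = U \oplus \span\{v\}$ as an orthogonal direct sum, so $\dim V = \dim U + 1$ and $P_V = P_U + P_v$, where $P_v$ denotes the orthogonal projector onto $\span\{v\}$.

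The next step is to analyze $A_V = (P_V A)|_V$ with respect to this orthogonal decomposition. For $u \in U$, $A$-invariance of $U$ gives $Au \in U \subset V$, so $A_V u = P_V A u = A u$; thus $A_V$ acts on $U$ exactly as $A_U = A|_U$. For the one-dimensional complement, $A$-invariance of $U^\perp$ yields $Av \in U^\perp$, and since $V \cap U^\perp = \span\{v\}$, we get
\[
A_V v = P_V(Av) = P_v(Av) = \rho(v)\,v.
\]
So $A_V$ is block diagonal with respect to $V = U \oplus \span\{v\}$: it equals $A_U$ on $U$ and the scalar $\rho(v)$ on $\span\{v\}$. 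Taking spectra gives immediately $\Sigma(A_V) = \{\rho(v)\} \cup \Sigma(A_U)$, which is the first assertion.

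For the second assertion, I need to locate $\rho(v)$ strictly below every point of $\Sigma(A_U)$. By construction, $\Sigma(A_U) = \Sigma(A|_U) \subset (\rho(y), \infty)$. On the other side, $A|_{U^\perp}$ is self-adjoint on $U^\perp$ with spectrum equal to $\Sigma(A) \setminus \Sigma(A|_U) \subset (-\infty, \rho(y))$. The hypothesis $\rho(y) \notin \Sigma(A)$, together with closedness of the spectrum, forces an $\varepsilon>0$ with $(\rho(y)-\varepsilon,\rho(y)+\varepsilon) \cap \Sigma(A) = \emptyset$, so that $\sup \Sigma(A|_{U^\perp}) \le \rho(y)-\varepsilon$. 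Since $v$ is a nonzero element of $U^\perp$, the spectral theorem gives $\rho(v) \le \sup \Sigma(A|_{U^\perp}) < \rho(y)$. Combining, $\rho(v) < \rho(y) < \inf \Sigma(A_U)$, so $\rho(v)$ is strictly separated from $\Sigma(A_U)$, hence an isolated point of $\Sigma(A_V)$; and its $A_V$-eigenspace, being a subset of the one-dimensional $\span\{v\}$ (no contribution from $U$ since $\rho(v)\notin\Sigma(A_U)$), has multiplicity exactly one.

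The main obstacle is the spectral-theory step in the last paragraph: rigorously invoking that $U$ is the spectral subspace for $\Sigma(A) \cap (\rho(y),\infty)$ so that $U^\perp$ is $A$-invariant with $\Sigma(A|_{U^\perp}) = \Sigma(A) \cap (-\infty,\rho(y))$, and exploiting $\rho(y) \notin \Sigma(A)$ to obtain the gap estimate $\sup \Sigma(A|_{U^\perp}) < \rho(y)$. Everything else is an elementary direct-sum computation, so once this spectral gap is in hand, the whole lemma follows.
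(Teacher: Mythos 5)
Your proof is correct and takes essentially the same route as the paper: the orthogonal splitting $V=U\oplus\span\{v\}$ with $v=(I-P_U)y$, the block-diagonal action of $A_V$ (so $v$ is an eigenvector of $A_V$ with eigenvalue $\rho(v)$), and the placement of $\rho(v)$ strictly below $\rho(y)<\Sigma_{\min}(A_U)$. The only cosmetic difference is that the paper gets $\rho(v)\le\rho(y)$ from the variational principle, whereas you use the spectral gap of $A|_{U^\perp}$ at $\rho(y)$, which in fact makes the strict inequality $\Sigma_{\min}(A_V)<\rho(y)$ slightly more explicit.
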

\begin{proof}
 Since $V=U+\span\{y\}$, we have $\dim\left(V\cap U^\perp\right)=1.$ We define a nonzero vector
$v=(I-P_U)y\in V\cap U^\perp$, and notice that $V=U\oplus\span\{v\}$ is an orthogonal sum. 
Since $U$ is an $A$-invariant subspace, it is also $A_V$-invariant. Then  the decomposition 
$V=U\oplus\span\{v\}$ implies that $v$ is an eigenvector of the operator $A_V$,
corresponding to the eigenvalue $\rho(v).$
Moreover, by the variational principle for the Rayleigh quotient, 
$\rho(v)\leq\rho(y)<\Sigma_{\min}\left(A_{U}\right)$,
according to the definition of $U$. 
Thus, we deduce that 
$\Sigma\left(A_{V}\right)= \rho(v) \cup \Sigma\left(A_{U}\right)$, 
where  $\rho(v)=\Sigma_{\min}\left(A_{V}\right)$ is an isolated point.
The multiplicity of the eigenvalue  $\rho(v)$ is one, because, again,
$\dim\left(V\cap U^\perp\right)=1.$ 
\end{proof}

We are now prepared to prove our main result in this section. 
\begin{theorem} 
\label{lemma:projectorresidualsqu}
Let $\alpha<\rho(y)<\beta$ where $\alpha$ and $\beta$ are the nearest points of 
the spectrum of $A$ to $\rho(y)$. 
Let $U$ be the invariant subspace of $A$ associated with all of the  elements of the
spectrum of $A$ larger than $\rho(y)$. We define $x=(I-P_U)y$ and 
${S}=\span\{x, y\}$.
Then
\begin{eqnarray} 
 \label{eqn:genearalproresidualsqu}
\left(\beta-\rho(y)\right)\left(\rho(y)-\alpha\right)
&\leq&
\frac{\|P_{S}r(y)\|^2}{\|y\|^2}.
\end{eqnarray}
\end{theorem}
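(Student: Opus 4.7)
The strategy I would take is to lift the problem to the $A$-invariant enlargement $V = U + \span\{y\}$ introduced in Lemma~\ref{lemma:technical} and then apply Lemma~\ref{lemma:projectorresidualsquare1} to the compressed operator $A_V$ in place of $A$. Lemma~\ref{lemma:technical} tells us that $\Sigma(A_V) = \{\rho(v)\} \cup \Sigma(A_U)$ with $v=(I-P_U)y$, where $\rho(v)=\Sigma_{\min}(A_V)$ is an isolated eigenvalue of multiplicity one whose eigenspace is $\span\{v\}$, and where the next point of $\Sigma(A_V)$ above $\rho(v)$ is $\Sigma_{\min}(A_U)=\beta$. Thus with respect to $A_V$ the vector $y$ sits between an isolated simple smallest spectral point and its successor, which is exactly the setting of Lemma~\ref{lemma:projectorresidualsquare1}.

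To actually apply Lemma~\ref{lemma:projectorresidualsquare1} I would first verify $\dim S = 2$, i.e.,\ $P_U y \neq 0$: otherwise $y\in U^\perp$, but $U^\perp$ is $A$-invariant with $\Sigma(A|_{U^\perp})\subset(-\infty,\alpha]$, so the variational principle would force $\rho(y)\leq\alpha$, contradicting $\alpha<\rho(y)$. Next, the projection of $y$ onto the smallest eigenspace $\span\{v\}$ of $A_V$ is exactly $v=x$, matching the construction in Lemma~\ref{lemma:projectorresidualsquare1}. Since Lemmas~\ref{lemma:rayleighquoientequation} and~\ref{lemma:residualequation} applied to $A_V$ give $\rho(y,A_V)=\rho(y)$ and $r(y,A_V)=P_V r(y)$, Lemma~\ref{lemma:projectorresidualsquare1} applied to $A_V$ on $V$ yields
\[
(\beta-\rho(y))(\rho(y)-\rho(v)) \;\leq\; \frac{\|P_S r(y,A_V)\|^2}{\|y\|^2} \;=\; \frac{\|P_S P_V r(y)\|^2}{\|y\|^2} \;=\; \frac{\|P_S r(y)\|^2}{\|y\|^2},
\]
where the last equality uses $S\subset V$, so $P_S P_V = P_S$.

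It remains to replace $\rho(v)$ by $\alpha$ on the left. Since $v=(I-P_U)y\in U^\perp$ and $\Sigma(A|_{U^\perp})\subset(-\infty,\alpha]$, the variational principle again gives $\rho(v)\leq\alpha$, so $\rho(y)-\rho(v)\geq\rho(y)-\alpha>0$; multiplying by $\beta-\rho(y)>0$ yields \eqref{eqn:genearalproresidualsqu}. The one place where the only real subtlety lies is the identification $P_S r(y,A_V) = P_S r(y)$: the theorem's claim is phrased in terms of the residual of $A$ in $\H$, whereas Lemma~\ref{lemma:projectorresidualsquare1} natively produces a bound in terms of the residual of $A_V$ in $V$; the inclusion $S\subset V$ is exactly what makes these two quantities agree, and it is what allows the extremal-eigenvalue case already proved to deliver the interior-eigenvalue case for free.
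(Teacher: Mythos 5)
Your proposal is correct and follows essentially the same route as the paper: lift to $V=U+\span\{y\}$ via Lemma~\ref{lemma:technical}, apply Lemma~\ref{lemma:projectorresidualsquare1} to $A_V$ (whose smallest spectral point $\rho(v)$ is isolated with eigenspace $\span\{v\}$ and $v=(I-P_U)y=x$, so the two subspaces $S$ coincide), and then relax $\rho(v)\leq\alpha$ on the left-hand side. Your extra care in verifying $\dim S=2$ and in justifying $P_S r(y,A_V)=P_S P_V r(y)=P_S r(y)$ only makes explicit steps the paper leaves implicit.
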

\begin{proof} 
Applying Lemma \ref{lemma:technical} and   
noticing that $\Sigma_{\min}\left(A_{U}\right)=\beta$, we observe that
the assumptions of Lemma \ref{lemma:projectorresidualsquare1} are satisfied 
if $A_{V}$ replaces $A$, where the scalar $\beta$ and the eigenvalue $\Sigma_{\min}\left(A_{V}\right)$ are 
the nearest points of the spectrum of the operator $A_{V}$ to  $\rho(y,A_{V})=\rho(y)\in\left(\Sigma_{\min}\left(A_{V}\right),\beta\right).$
Thus, by Lemma \ref{lemma:projectorresidualsquare1},
\[ 
\left(\beta-\rho(y)\right)\left(\rho(y)-\Sigma_{\min}\left(A_{V}\right)\right)
\leq\frac{\|P_{S}r(y,A_{V})\|^2}{\|y\|^2}
= \frac{\|P_{S}r(y)\|^2}{\|y\|^2}.
\]
Since  $\Sigma_{\min}\left(A_{V}\right)\leq\alpha<\rho(y)$, we obtain
\eqref{eqn:genearalproresidualsqu}.
\end{proof}
\begin{remark}
Substituting $-A$ for $A$ in Lemmas  \ref{lemma:projectorresidualsquare1} and \ref{lemma:technical},
and using similar arguments, we end up with the same subspace 
 $S=\span\{P_U y, y\}=\span\{(I-P_U)y, y\}$ and obtain exactly the same bound. 
\end{remark}

Repeating the arguments of section \ref{review}, 
we obtain improved versions of the Kato-Temple and other bounds, reviewed in section \ref{review},
from Theorem \ref{lemma:projectorresidualsqu}.
Specifically, defining the subspace $S$ as in Theorem \ref{lemma:projectorresidualsqu}
and using the notation and assumptions of section \ref{review}, we get 
the improved  Kato-Temple bound as follows, 
\begin{equation*}
-\frac{1}{\rho(y)-a}\frac{\|P_S r(y)\|^2}{\|y\|^2}\leq
\rho(y)-\lambda\leq\frac{1}{b-\rho(y)}\frac{\|P_S r(y)\|^2}{\|y\|^2},
\end{equation*}
which, after introducing $\delta=\min_{\eta\in\{\Sigma(A)\backslash\{\lambda\}\}}|\eta-\rho(y)|$,  implies 
\begin{equation*}
|\lambda-\rho(y)|\leq \frac{1}{\delta}\frac{\|P_S r(y)\|^2}{\|y\|^2}.
\end{equation*}
Theorem \ref{lemma:projectorresidualsqu} also gives us the improved Krylov-Weinstein bound, 
\begin{equation*}
|\lambda-\rho(y)|\leq \frac{\|P_S r(y)\|}{\|y\|}.
\end{equation*}
Here, as in the original Krylov-Weinstein bound \eqref{eq:residual1}, only the existence of 
$\lambda\in\Sigma(A)$, which may or may not be an eigenvalue, is guaranteed. 

\begin{remark}\label{rem:V}
We note that 
$\|P_{S}r(y)\|\leq\|P_{V}r(y)\|\leq\|r(y)\|$, since $S\subseteq V\subset \H$.  
Thus, first, the classical a posteriori bounds now trivially follows from our bounds. 
Second, if the smallest quantity $\|P_{S}r(y)\|$ may not be readily accessible
in a practical situation, it can be bounded above by the following much simpler 
expression, $\|P_{V}r(y)\|$, which can still be dramatically smaller 
compared to the standard value $\|r(y)\|$ used in the Temple bound~\eqref{eqn:betaalphabound}. 
As an alternative for $V=U +\span\{y\}$ as defined in Lemma~\ref{lemma:technical}, 
we can take $V=U^\perp +\span\{y\}\supseteq S$.
\end{remark}

As an illustration of possible improvements in 
$\|P_{S}r(y)\|\leq\|P_{V}r(y)\|\leq\|r(y)\|$, 
let us again, as in Remark \ref{rem:unbounded}, 
consider the example from Davis and Kahan \cite{dk70} in $\H=l_2$. 
Let $\epsilon=1/2$,  
$y=(1,\epsilon,\epsilon^2,\ldots)^T\in l_2,$ and 
$A=\diag(1,\epsilon^{-1},\epsilon^{-2},\ldots)$.
We get $\rho(y)=1+\epsilon$ even though both the  
vector $Ay=(1,1,1,\ldots)^T$ and the residual $r(y)=Ay-\rho(y)y$ 
have infinite norms in $l_2$. Since $\|r(y)\|=\infty$,  neither 
Temple bound~\eqref{eqn:betaalphabound}, nor any other $\|r(y)\|$-based bound, can be used in this example, 
as  Davis and Kahan correctly point out in \cite[p. 42]{dk70}.

At the same time, let us choose the subspace $U$ 
as described in Theorem \ref{lemma:projectorresidualsqu}. Specifically, since 
$\epsilon=1/2$, we have $\rho(y)=1+\epsilon=1.5\in(1,2),$ thus 
$U^\perp$ is simply the span of the first coordinate sequence $e_1=(1,0,0,\ldots)^T$, 
which is the eigenvector of $A$ corresponding to the smallest eigenvalue $\rho(x)=1$. 
This gives $x=(I-P_U)y=e_1$ and Theorem \ref{lemma:projectorresidualsqu} turns into its 
particular case, Lemma \ref{lemma:projectorresidualsquare1}.
Since ${S}=\span\{x, y\}$, the sequence  $x=e_1$ is an eigenvector of both operators $A$ and $A_S$ 
corresponding to the same smallest eigenvalue $\alpha=\lambda_{\min}\left(A\right)=\lambda_{\min}\left(A_S\right)=1$.
Then the sequence $w=y-x=(0,\epsilon,\epsilon^2,\epsilon^3,\ldots)^T$ is the second eigenvector of $A_S$ 
corresponding to the eigenvalue $\rho(w)=\lambda_{\max}\left(A_S\right)=(1+\epsilon)/\epsilon=3$, while $\beta=1/\epsilon=2.$
Rather than explicitly calculating the sequence $P_Sr(y)$ and its $l_2$ norm, we can take a short cut using  
Lemma~\ref{lemma:generalresidual1},
\begin{eqnarray*}
\frac{\|P_{S}r\left(y\right)\|^2}{\|y\|^2}
&=&
\left[\lambda_{\max}\left(A_S\right)-\rho(y)\right]
\left[\rho(y)-\lambda_{\min}\left(A_S\right)\right]\\
&=&
\left[\frac{1+\epsilon}{\epsilon}-(1+\epsilon)\right] \left[1+\epsilon-1\right]\\
&=& 
1-\epsilon^2
=
\frac34,
\end{eqnarray*}
which gives the  right-hand side of our bound \eqref{eqn:genearalproresidualsqu}. Its left-hand side is 
\[
\left(\beta-\rho(y)\right)\left(\rho(y)-\alpha\right)
=
\left(\frac1\epsilon-(1+\epsilon)\right) \left((1+\epsilon)-1\right)=
1-\epsilon-\epsilon^2=\frac14.
\]
We conclude that our bound $1/4\leq3/4$ holds, while the Temple bound fails, giving just  
the trivial statement $1/4\leq\infty$.

By modifying the choice of the sequence $y$ in the example above, e.g.,\
by choosing  $y=(0,1,\epsilon,\epsilon^2,\ldots)^T$, one can construct similar situations, but where 
Theorem \ref{lemma:projectorresidualsqu} is different from Lemma \ref{lemma:projectorresidualsquare1}.

Let us finally comment that in this example our alternative choices for the subspace $V$, namely,  
$V=U +\span\{y\}$ as in Lemma \ref{lemma:technical} or 
 $V=U^\perp +\span\{y\}$ as in Remark \ref{rem:V},
give extremely different results. Evidently, the first choice leads to $V=H$ 
and gives no improvement compared to the broken Temple bound, 
while the second choice gives $V=S$ and works, as has already been shown above. 

This example demonstrates an opportunity to easily treat unbounded operators. The unbounded case 
naturally appears for partial differential operators in the $L_2$ norm; see, e.g.,\
\cite{ka09} for a similar approach to analyze the Finite Element Method.  

\section{Improving known error bounds for eigenvectors}\label{sec:vectors}
The main topic of this work is RQ bounds. However, having the machinery of 
identities already constructed in section \ref{section2}, 
we need only very little extra effort to 
revisit and improve some well known error bounds for eigenvectors, as a bonus.
The reader might have noticed a couple of identities, 
\eqref{eqn:generalresidual2} and \eqref{eqn:tanres}, 
that have not yet been used. For convenience, we repeat them here, changing the 
notation in \eqref{eqn:generalresidual2}, for the case where $S=\span\{x,y\}$, 
the vector $x$ is an eigenvector of $A$ (and, thus, $A_S$) with the eigenvalue $\lambda=\rho(x)$, 
the scalar $\eta\neq\lambda$ denotes the other eigenvalue of $A_S$, 
and $\theta=\angle\left\{x,y\right\}$:
\begin{eqnarray}\label{eqn:sin2theta} 
\sin\left(2\theta\right)
&=&\frac{2}{\left|\lambda-\eta\right|}\frac{\|P_{S}r\left(y\right)\|}{\|y\|}
\end{eqnarray}
and, if $\theta<\pi/2,$
\begin{eqnarray} \label{eqn:tantheta} 
\tan\left(\theta\right)&=&\frac1{\left|\eta-\rho(y)\right|}
\frac{\|P_{S}r\left(y\right)\|}{\|y\|}.
\end{eqnarray}

In order to use identities \eqref{eqn:sin2theta} and \eqref{eqn:tantheta}, 
one needs to bound the scalar $\eta$. It is easily possible in the special case 
where $\lambda=\lambda_{\min}\left(A\right)$ or $\lambda=\lambda_{\max}\left(A\right)$.
The latter choice is reduced to the former one by substituting $-A$ for $A$.
We can use the arguments from the proof of Lemma \ref{lemma:projectorresidualsquare1} to handle 
the case $\lambda=\lambda_{\min}\left(A\right)$. 
Let the assumptions of Lemma \ref{lemma:projectorresidualsquare1} be satisfied, i.e., 
let $\alpha=\lambda_{\min}\left(A\right)$ be an isolated point of the spectrum 
of $A$ and $\beta>\alpha$ be the nearest larger point of the spectrum 
of $A$. Let $\X$ denote the complete eigenspace corresponding to 
the eigenvalue $\alpha.$ For a given vector $y\neq0$, such that $\alpha<\rho(y)<\beta$, we define 
$x=P_{\X}y$, where $P_{\X}$ is the orthogonal projector to the subspace $\X$; and we denote $S=\span\{x,y\}$. 
Then $\lambda=\rho(x)=\alpha$ and
$\beta\leq\eta$, directly by the proof of Lemma \ref{lemma:projectorresidualsquare1}. 
Substituting $\beta\leq\eta$ in \eqref{eqn:sin2theta} and \eqref{eqn:tantheta}, 
we obtain the following improvements of the corresponding bounds from~\cite[p. 11]{dk70},
\begin{eqnarray*} 
\sin\left(2\theta\right)
&\leq&\frac{2}{\beta-\lambda}\frac{\|P_{S}r\left(y\right)\|}{\|y\|}
\end{eqnarray*}
and, if $\theta<\pi/2,$ from~\cite[p. 11]{dk70} (see a single-vector version in \cite[Corollary 11.7.1]{parlett80}),
\begin{eqnarray*}
\tan\left(\theta\right)&\leq&\frac1{\beta-\rho(y)}
\frac{\|P_{S}r\left(y\right)\|}{\|y\|}.
\end{eqnarray*}

The example from Davis and Kahan \cite[p. 42]{dk70}, discussed above, is also applicable  
here, to observe that the use of the norm of the projected residual $\|P_{S}r\left(y\right)\|$ in our bounds 
can bring a dramatic improvement compared to the traditional term $\|r\left(y\right)\|$. 

One final observation concerns the classical $\sin(\theta)$ bound, see, e.g., 
\cite[p. 10]{dk70} and, for a single-vector version that we use here, 
 \cite[Corollary 6.22]{Chatelin} or \cite[Theorem 3.9]{saas92}, 
\begin{eqnarray} \label{eqn:sintheta} 
\sin\left(\theta\right)&\leq&\frac1{\delta}
\frac{\|r\left(y\right)\|}{\|y\|},
\end{eqnarray}
where  $\delta=\min_{\eta\in\{\Sigma(A)\backslash\{\lambda\}\}}|\eta-\rho(y)|$.
Interestingly, an attempt of a naive improvement of \eqref{eqn:sintheta}, where $\|P_{S}r\left(y\right)\|$ simply substitutes 
for  $\|r\left(y\right)\|$ without any other changes, fails as the following example demonstrates.  
Let $y=(1,1,1)^T$ and $A=\diag(1,0,-1)$. We get $Ay=(1,0,-1)^T$ and $\rho(y)=0$, so  $r(y)=Ay$.  
Let $x=(0,1,0)^T$ with $\lambda=\rho(x)=0$. The residual  $r(y)=(1,0,-1)^T$ is perpendicular both to 
$x$ and $y$, thus $P_{S}r\left(y\right)=0,$ while $\delta=1$ and $\sin^2(\theta)=2/3.$ At the same time, 
$\|r(y)\|^2/\|y\|^2=2/3$, i.e.,\ the original $\sin(\theta)$ bound \eqref{eqn:sintheta} holds fine. 
The moral of this observation is that one cannot just blindly substitute  $\|P_{S}r\left(y\right)\|$  
for  $\|r\left(y\right)\|$ and hope for improvement without breaking the original statement. 


\section*{Conclusion}
We demonstrate the fundamental nature of our new  RQ identities and concise inequalities, 
by using them to derive, in a unifying manner, and improve several known eigenvalue error bounds, 
including the famous Temple bound.  The next natural step is to attempt to extend our single-vector RQ results 
to matrix RQ, thus hoping to simplify the derivation and to improve some classical results for 
the Rayleigh-Ritz method. We conjecture, by analogy with the approach of this paper, 
that the key to such a development is a careful sharp analysis of sensitivity of 
the Ritz values with respect to variations in the trial subspace in the Rayleigh-Ritz method. 

\section*{Acknowledgments}
We thank Ilse Ipsen, Dianne O'Leary, and anonymous referees for constructive comments that 
helped us to improve the presentation. We are grateful to SIMAX Associate Editor Michiel Hochstenbach 
for handling our paper. 

\vskip12pt


\def\refname{\centerline{\footnotesize\rm REFERENCES}}


\begin{thebibliography}{34}

\bibitem{Chatelin}
{\sc F. Chatelin},
{\em Spectral Approximation of Linear Operators},
Society for Industrial and Applied Mathematics (SIAM),  Philadelphia, PA, 2011.
\doi{10.1137/1.9781611970678}.

\bibitem{dk70}
{\sc C.~Davis and W.~M. Kahan},
{\em The rotation of eigenvectors by a perturbation}. {III},
{SIAM J. Numer. Anal.}, 7(1970), pp. 1--46.
\doi{10.1137/0707001}.

\bibitem{kato76}
{\sc T. Kato},
{\em On the upper and lower bounds of eigenvalues}, J.Phys. Soc. Japan, 4(1949), pp.~334--339.
\doi{10.1143/JPSJ.4.334}.

\bibitem
{k86}
{\sc A.~V. Knyazev},
{\it Sharp a priori error estimates of the {Rayleigh-Ritz} method without
 assumptions of fixed sign or compactness},
{Math. Notes}, 38(1986), pp. 998--1002.
\doi{10.1007/BF01157020}.

\bibitem{knyazev97}
{\sc A.~V. Knyazev},
{\em New estimates for {R}itz vectors},
{Math. Comp.}, 66(1997), pp. 985--995.
\doi{10.1090/S0025-5718-97-00855-7}.

\bibitem{kja10}
{\sc A.~V. Knyazev, A. Jujunashvili and M.~E. Argentati},
 {\em Angles between infinite dimensional subspaces with applications to the {Rayleigh-Ritz} and alternating projectors methods},
 {J.~Funct. Anal.}, 259(2010), pp. 1323--1345.
\doi{10.1016/j.jfa.2010.05.018}.

\bibitem{ka03}
{\sc A.~V. Knyazev and M.~E. Argentati},
{\em On proximity of {R}ayleigh quotients for different vectors and {R}itz
  values generated by different trial subspaces},
{Linear Algebra Appl.}, 415(2006), pp.~82--95.
\doi{10.1016/j.laa.2005.06.032}.

\bibitem{ka09}
{\sc A.~V. Knyazev and M.~E. Argentati},
{\em {R}ayleigh-{R}itz majorization error bounds with applications to {FEM}}, SIAM J. Matrix Anal. Appl.
31(2009), 3, pp. 1521--1537.
\doi{10.1137/08072574X}.

\bibitem{parlett80}
{\sc B.~N. Parlett},
{\em The Symmetric Eigenvalue Problem}, Society for Industrial and Applied Mathematics (SIAM), Philadelphia, PA, 1998. 
\doi{10.1137/1.9781611971163}.

\bibitem{rs}
{\sc M. Reed and B. Simon}, 
{\em Analysis of Operators. Volume IV of Methods of Modern Mathematical Physics},
Academic Press, 1978.
ISBN {978-0125850049}.

\bibitem{ruhe76}
{\sc A. Ruhe},
{\em Computation of eigenvalues and eigenvectors}, Lecture Notes in Math., 572(1977), pp.~130--184.
\doi{10.1007/BFb0116617}.

\bibitem{saas92}
{\sc Y. Saad},
{\em Numerical Methods for Large Eigenvalue Problems}, Revised Edition, 
Society for Industrial and Applied Mathematics (SIAM), Philadelphia, PA, 2011.
\doi{10.1137/1.9781611970739}.

\bibitem{se2003}
{\sc G. L.G. Sleijpen , J. van den Eshof}, 
{\em On the use of harmonic {R}itz pairs in approximating internal eigenpairs},
 Linear Algebra Appl. 358 (2003), pp.~115--137.
\doi{10.1016/S0024-3795(01)00480-3}

\bibitem{sun91}
{\sc J. Sun},
{\em Eigenvalues of {R}ayleigh quotient matrices}, Numer. Math., 59(1991), pp.~603--614.
\doi{10.1007/BF01385798}.

\bibitem{ss90}
{\sc G.~W. Stewart and J.~G. Sun},
{\em Matrix Perturbation Theory},
{Computer Science and Scientific Computing},
{Academic Press Inc.},
{Boston, MA},
{1990},
pp. {xvi+365},
ISBN {978-0126702309}.

\bibitem{temple28}
{\sc G. Temple},
{\em The theory of {R}ayleigh's principle as applied to continuous systems}, Proc. Roy. Soc. London. Ser. A., 119(1928), pp.~276--293.
\doi{10.1098/rspa.1928.0098}.

\bibitem{yo80}
{\sc K. Yosida},
{\em Functional Analysis},
{Springer.}
{Reprint of the 6th ed. Berlin, Heidelberg, New York 1980, 1995, XII, pp. 502.}
ISBN {978-3-540-58654-8}.

\bibitem{angles2010}
{\sc Y. Zhang, L. Qiu},
{\em From subadditive inequalities of singular values to triangle inequalities
   of canonical angles},
SIAM J. Matrix Anal. Appl.
31, 2009/10, 4, pp.~1606--1620. 
\doi{10.1137/09074841X}.


\end{thebibliography}
\end{document}